\newcommand{\ind}{\mathop{\mathrm{ind}}}
\newcommand{\F}{\mbox{$\mathbb F$}}	% For Finite fields
\theoremstyle{definition}
\newtheorem{theorem}{Theorem}[section]
\newtheorem{lemma}[theorem]{Lemma}
\newtheorem{corollary}[theorem]{Corollary}
\theoremstyle{definition}
\newtheorem{definition}[theorem]{Definition}
\newtheorem{example}[theorem]{Example}
\theoremstyle{remark}
\newtheorem{remark}[theorem]{Remark}
\theoremstyle{definition}
\newcounter{cnt}
\def\mydggeometry{\makeatletter\dg@YGRID=1\dg@XGRID=20\unitlength=0.003pt\makeatother}
\makeatother \theoremstyle{remark}
\numberwithin{equation}{section}
\let\bwdg\bigwedge
\def\bigwedge{{\textstyle\bwdg}}
\newcommand{\Q}{\mathbb{Q}}
\newcommand{\Z}{\mathbb{Z}}
\newcommand{\nc}{\newcommand}
\newcommand{\rnc}{\renewcommand}
\nc{\cal}{\mathcal} \nc{\goth}{\mathfrak} \rnc{\bold}{\mathbf}
\nc\bomega{{\mbox{\boldmath $\omega$}}} \nc\bpsi{{\mbox{\boldmath $\Psi$}}}
 \nc\balpha{{\mbox{\boldmath $\alpha$}}}
 \nc\bpi{{\mbox{\boldmath $\pi$}}}
 \nc\bvpi{{\mbox{\boldmath $\varpi$}}}
\nc\chara{\operatorname{ch}}
  \nc\bxi{{\mbox{\boldmath $\xi$}}}
\nc\bmu{{\mbox{\boldmath $\mu$}}} \nc\bcN{{\mbox{\boldmath $\cal{N}$}}} \nc\bcm{{\mbox{\boldmath $\cal{M}$}}} \nc\blambda{{\mbox{\boldmath
$\lambda$}}}\nc\bnu{{\mbox{\boldmath $\nu$}}}
\def\section{\def\@secnumfont{\mdseries}\@startsection{section}{1}%
  \z@{.7\linespacing\@plus\linespacing}{.5\linespacing}%
  {\normalfont\scshape\centering}}
\def\subsection{\def\@secnumfont{\bfseries}\@startsection{subsection}{2}%
  {\parindent}{.5\linespacing\@plus.7\linespacing}{-.5em}%
  {\normalfont\bfseries}}
 \nc{\Hom}{\operatorname{Hom}}
  \nc{\mode}{\operatorname{mod}}
\nc{\End}{\operatorname{End}} \nc{\wh}[1]{\widehat{#1}} \nc{\Ext}{\operatorname{Ext}} \nc{\ch}{\text{ch}} \nc{\ev}{\operatorname{ev}}
\nc{\Ob}{\operatorname{Ob}} \nc{\soc}{\operatorname{soc}} \nc{\rad}{\operatorname{rad}} \nc{\head}{\operatorname{head}}
 \nc{\Cal}{\cal} \nc{\Xp}[1]{X^+(#1)} \nc{\Xm}[1]{X^-(#1)}
\nc{\N}{{\bold N}}  \nc\boa{\bold a} \nc\bob{\bold b} \nc\boc{\bold c} \nc\bod{\bold d} \nc\boe{\bold e} \nc\bof{\bold f} \nc\bog{\bold g}
\nc\boh{\bold h} \nc\boi{\bold i} \nc\boj{\bold j} \nc\bok{\bold k} \nc\bol{\bold l} \nc\bom{\bold m} \nc\bon{\mathbb n} \nc\boo{\bold o}
\nc\bop{\bold p} \nc\boq{\bold q} \nc\bor{\bold r} \nc\bos{\bold s} \nc\boT{\bold t} \nc\boF{\bold F} \nc\bou{\bold u} \nc\bov{\bold v}
\nc\bow{\bold w} \nc\boz{\bold z}\nc\ba{\bold A} \nc\bb{\bold B} \nc\bc{\mathbb C} \nc\bd{\bold D} \nc\be{\bold E} \nc\bg{\bold
G} \nc\bh{\bold H} \nc\bi{\bold I} \nc\bj{\bold J} \nc\bk{\bold K} \nc\bl{\bold L} \nc\bm{\bold M} \nc\bn{\mathbb N} \nc\bo{\bold O} \nc\bp{\bold
P} \nc\bq{\bold Q} \nc\br{\bold R} \nc\bs{\bold S} \nc\bt{\bold T} \nc\bu{\bold U} \nc\bv{\bold V} \nc\bw{\bold W} \nc\bz{\mathbb Z} \nc\bx{\bold
x} \nc\KR{\bold{KR}} \nc\rk{\bold{rk}} \nc\het{\text{ht }}
\nc\toa{\tilde a} \nc\tob{\tilde b} \nc\toc{\tilde c} \nc\tod{\tilde d} \nc\toe{\tilde e} \nc\tof{\tilde f} \nc\tog{\tilde g} \nc\toh{\tilde h}
\nc\toi{\tilde i} \nc\toj{\tilde j} \nc\tok{\tilde k} \nc\tol{\tilde l} \nc\tom{\tilde m} \nc\ton{\tilde n} \nc\too{\tilde o} \nc\toq{\tilde q}
\nc\tor{\tilde r} \nc\tos{\tilde s} \nc\toT{\tilde t} \nc\tou{\tilde u} \nc\tov{\tilde v} \nc\tow{\tilde w} \nc\toz{\tilde z} \nc\woi{w_{\omega_i}}
\begin{document}
\setcounter{section}{0}
\setcounter{tocdepth}{1}

%%%%%%%%%%%%%%%%%%%%%%%%%%%%%%%%%%%%%%%%%%%%

\title{On common index divisor of the number fields defined by   $x^7+ax+b$}

\author[Anuj Jakhar]{Anuj Jakhar}
\author[Sumandeep Kaur]{Sumandeep Kaur}
\author[Surender Kumar]{Surender Kumar}
\address[Anuj Jakhar]{Department of Mathematics, Indian Institute of Technology (IIT) Madras}
\address[Sumandeep Kaur]{Department of Mathematics, Panjab University Chandigarh}
\address[Surender Kumar]{Department of Mathematics, Indian Institute of Technology (IIT) Bhilai}
%\email[Anuj Jakhar]{anujjakhar@iitbhilai.ac.in \\ anujiisermohali@gmail.com}
%\email[Sumandeep Kaur]{sumandhunay@gmail.com}

\thanks{The first author is thankful to SERB grant SRG/2021/000393 and IIT Madras for NFIG  RF/22-23/1035/MA/NFIG/009034. The second author is grateful to the Council of Scientific and Industrial Research, New Delhi for providing financial support in the form of Senior Research Fellowship through Grant No. $09/135(0878)/2019$-EMR-$1$. The third author is grateful to the University Grants Commision, New Delhi for providing financial support in the form of Junior Research Fellowship through Ref No.1129/(CSIR-NET JUNE 2019).}

%\thanks{The financial support from IIT Bhilai  is gratefully acknowledged by the first author. The second author is grateful to the Council of Scientific and Industrial Research, New Delhi for providing financial support in the form of Senior Research Fellowship through Grant No. $09/135(0878)/2019$-EMR-$1$.}

\subjclass [2010]{11R04, 11R21.}
\keywords{Monogenity, Theorem of Ore,  prime ideal factorization.}

\begin{center}

%{(Dedicated to Prof. Sudesh Kaur Khanduja on her $72^{nd}$ birthday)}
\end{center}

\begin{abstract}
\noindent  Let $f(x)=x^7+ax+b$ be an irreducible polynomial having integer coefficients and   $K=\mathbb{Q}(\theta)$ be an algebraic number field generated by a  root $\theta$ of $f(x)$. In the present paper, for every rational prime $p$, our objective is to  determine the necessary and sufficient conditions involving only $a,~b$ so that $p$ is a divisor of  the index of the field $K$. In particular, we provide sufficient conditions on $a$ and $b$, for which $K$ is non-monogenic. In a special case, we show that if either $8$ divides both $a\pm1$, $b$ or $32$ divides both $a+4$, $b$, then  $K$ is non-monogenic. We illustrate our results through examples.
\end{abstract}
\maketitle

\section{Introduction and statements of results}\label{intro}

Let $K=\Q(\theta)$ be an algebraic number field with $\theta$ in the ring $\mathbb{O}_K$ of algebraic integers of $K$. Let $f(x)\in\Z[x]$  be the minimal polynomial of $\theta$ having degree $n$ over the field $\Q$ of rational numbers.   It is a basic result in algebraic number theory  that $\mathbb{O}_K$ is a free abelian group  of rank $n$.   An algebraic number field $K$ is said to be  monogenic if there exists some  $\alpha \in \mathbb{O}_K$ such that $\{1, \alpha,\cdots,\alpha^{n-1}\}$ is an integral basis of $K$.  In this case $\mathbb{O}_K=\Z[\alpha]$, i.e., $[\mathbb{O}_K:\Z[\alpha] ]=1$. If $K$ does not have any such $\alpha$, then the field $K$ is  said to be non-monogenic. It is well-known that every quadratic and cyclotomic field is monogenic.  It is important to know that whether a number field is monogenic or not.  It was Dedekind, who gave the first non-monogenic number field $K=\mathbb{Q}(\xi)$, where $\xi$ is a root of the polynomial $x^3-x^2-2x-8$ (cf. \cite[page 64]{Nar}).  The problem of testing the monogenity of number
fields and constructing power integral bases have been intensively studied (cf. \cite{ANH1}, \cite{ANH}, \cite{GR},  \cite{JK}, \cite{TM}, \cite{SJ}, \cite{MS}, \cite{SS}).  In $1984$, Funakura \cite{Fun} gave necessary and sufficient conditions on those integers $m$ for which  the quartic field $\Q(m^{1/4})$ is monogenic. Ahmad, Nakahara and Husnine in  
 \cite{ANH1}, \cite{ANH} proved that for a square free integer $m$ not congruent to $\pm 1\mod 9$, a pure field $\Q(m^{1/6})$ having degree six over $\Q$ is monogenic when $m\equiv 2$ or $3\mod 4$ and it is non-monogenic when $m\equiv 1\mod 4$.
 In $2017$,  Ga\'{a}l and Remete \cite{GR} studied monogenity of  algebraic number fields of the type $\Q(m^{1/n})$ where  $3\leq n\leq 9$ and $m$ is square free.  In \cite{JK}, A. Jakhar and S. Kumar provides some sufficient conditions for which the  number field $K$ defiend by the sextic trinomial $x^6+ax+b\in \Z[x],$ is non-monogenic.
  In \cite{GA}, Ga\'al studied monogenity of number fields defined by some sextic irreducible trinomials. %El. Fadil characterized when a prime $p$ is a common index divisor of the number field defined by $x^6+ax^3+b\in\Z[x]$ in \cite{YN}. %In $2021$, Jakhar and Kumar in \cite{JK} studied monogenity  of algebraic number fields which are defined by irreducible trinomial  $x^6+ax+b$. In 2022, Jakhar and Kaur provided some sufficient conditions for which the algebraic number field defined by $x^6+ax^m+b$ with $1\leq m \leq 5$ is non-monogenic. In particular, they proved that $K$ is non-monogenic if $a$ and $b+1$ are both divisible by either $8$ or $9$.

  Recall that for an algebraic number field $L=\Q(\xi)$ with $\xi$ an algebraic integer satisfying a monic irreducible  polynomial $g(x)$ over  $\Q$,  the discriminant $D$ of $g(x)$  and the discriminant $d_L$ of $L$ are related by the formula \begin{equation}\label{eq1}
 D=(\ind \xi)^2\cdot d_L. 
 \end{equation}
\indent Throughout this paper, $\ind \theta$ will  denote the index of the subgroup $\mathbb{Z}[\theta]$ in $\mathbb{O}_K $ and   $i(K)$ will stand for the  index of the field $K$ defined by $$ i(K) = \gcd\{\ind \alpha \mid  {\text  {} K=\mathbb{Q}(\alpha)  {\text { and }}      \alpha\in \mathbb{O}_K} \}.$$ A prime divisor of  $i(K)$ is called a common index divisor of $K$. Note that  $i(K)=1$, for every monogenic number field $K$. But there exist  non-monogenic number fields having $i(K)=1$, e.g., $K=\Q(\sqrt[3]{175})$ is non-monogenic with $i(K)=1$.\\
\indent In what follows, let $K=\Q(\theta)$ be an algebraic number field  with $\theta$ a root of an irreducible trinomial $f(x)=x^7+ax+b\in\Z[x]$, then for every rational prime $p$, we provide necessary and sufficient conditions on $a,~b$, so that $p$ is a common index divisor of $K$. In particular, under these conditions $K$ is non-monogenic. 
  Our method  is  based on the  theory of Newton polygons and theorem of Ore. In $1878$, (see [\cite {ZA},
  Theorem 6.1.4]) Dedekind  gave  a criterion to determine whether $p$
  divides the index  $[\mathbb{O}_K:\Z[\theta]]$. When $p$ divides  the index $ [\mathbb{O}_K:\Z[\theta]]$, then a method of Ore 1928, can be used in order to evaluate the prime ideal factorization of $p\mathbb{O}_K$
 (see \cite{ZB}, \cite{ZC}). If Ore's method doesn't work, then an
  algorithm developed by Guardia, Montes, and Nart \cite{ZD}, based on higher order Newton
  polygons can be used to determine the prime ideal
  factorization of $p\mathbb{O}_K$.

  For a prime $p$ and a non-zero $m$ belonging  to the ring $\mathbb{Z}_p$  of $p$-adic integers,  $v_p(m)$ will denote the highest power of $p$ dividing $m$.  For a non-zero integer $l$, let $l_p$ denote $\frac{l}{p^{v_p(l)}}$.  If  a rational prime $p$ is such that $p^6$ divides $ a$ and  $p^7$ divides $ b$, then $\theta/p$ is a root of the polynomial $x^7+(a/p^6)x+(b/p^7)$ having integer coefficients. So we may assume that for each prime $p$
  \begin{equation}\label{1}
  \text{either} ~v_p(a)\leq 5~{ \text{or}}~ v_p(b)\leq 6.
  \end{equation}  Also, $D$ will stand for the discriminant of $f(x)$. One can check that 
  \begin{equation}\label{eq2}
  D=-7^7b^6-6^6a^7.
  \end{equation}

  With the above notations and assumption (\ref{1}), we prove
  \begin{theorem}\label{Th1.1}
  	Let $K=\Q(\theta)$ be an algebraic number field with $\theta$ a root of an irreducible polynomial $f(x)=x^7+ax+b $. If $u=\frac{v_2(D)-6}{2}$ and  $\rho=2^{u}+\frac{7b}{6a},$ then $2\mid i(K)$ if and only if one of the following hold:
  	\begin{enumerate}
  		
  		\item $a\equiv 3\mod4$ and $b\equiv 0\mod8$.
  		\item  $a\equiv 3\mod8$ and $b\equiv 4\mod8$.
  		\item $a\equiv 1\mod 4$ and $b\equiv 0\mod 4.$
  		\item $a\equiv 1\mod4$, $b\equiv 2\mod4$, $v_2(D)$ is odd.
  		\item $a\equiv 1\mod4$, $b\equiv 2\mod4$, $v_2(D)$ is even and $v_2(f(\rho))=2u+1$.
  		\item $a\equiv 1\mod4$, $b\equiv 2\mod4$, $v_2(D)$ is even and $v_2(f(\rho))\ge2u+3$.
  		\item $a\equiv 28\mod32$ and $b\equiv 0\mod32$.
  		\item $a\equiv 48\mod64$ and  $b\equiv 0\mod 128$.
  	\end{enumerate}
  
  The next three corollaries follow immediately from the above theorem.
  \begin{corollary}\label{cor1}
  	Let $K=\Q(\theta)$ be an algebraic number field, where $\theta$  satisfies the  irreducible polynomial $x^7+ax+b\in \Z[x]$.   If $8$ divides both $a\pm1$ and $b$, then   $K$ is non-monogenic.
  	
  \end{corollary}
\begin{corollary}\label{cor1}
	Let $K=\Q(\theta)$ be an algebraic number field generated by a root  $\theta$ of an irreducible polynomial $x^7+ax+b$ belonging to $\Z[x]$.   If  both $a+4$ and $b$  are divisible by $32$, then $K$ is non-monogenic.
	
\end{corollary}
\begin{corollary}\label{cor1}
	Let $K=\Q(\theta)$ be an algebraic number field generated by a root  $\theta$ of an irreducible polynomial $x^7+ax+b$ belonging to $\Z[x]$. Then $K$ is non-monogenic,  if  both $a+16$ and $b$ are divisible by $128$.
	
\end{corollary}

  \end{theorem}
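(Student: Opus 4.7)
My plan rests on the classical common-index-divisor criterion of Hensel: $p\mid i(K)$ if and only if, for some $f\geq 1$, the number of prime ideals of $\mathbb{O}_K$ lying over $p$ with residue degree $f$ strictly exceeds $N_f(p)$, the count of monic irreducibles of degree $f$ in $\mathbb{F}_p[x]$. With $p=2$ and $[K:\Q]=7$, the only binding thresholds are $N_1(2)=2$ and $N_2(2)=1$, so the theorem reduces to determining, for every residue class of $(a,b)$ modulo appropriate powers of $2$, whether the splitting of $2\mathbb{O}_K$ contains at least three primes of residue degree $1$ or at least two of residue degree $2$.

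I would first partition by the shape of $\bar f(x)\bmod 2$. When $a,b$ are both odd, $\bar f=x^7+x+1$ is irreducible over $\mathbb{F}_2$ (no roots, not divisible by the unique irreducible quadratic or either irreducible cubic), so $2$ is inert and $2\nmid i(K)$. When $a$ is even and $b$ odd, $\bar f=(x+1)(x^3+x+1)(x^3+x^2+1)$ factors as three distinct irreducibles, and Dedekind's theorem yields three unramified primes of residue degrees $1,3,3$, again below the thresholds. The substantive analysis concerns $a$ odd, $b$ even (where $\bar f=x(x+1)^2(x^2+x+1)^2$, producing conditions (1)--(6)) and $a,b$ both even (where $\bar f=x^7$, producing (7)--(8)); assumption (\ref{1}) keeps the polygon in the second subcase finite.

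In the subcase $a$ odd, $b$ even, the simple factor $\phi_1=x$ Hensel-lifts to a unique unramified prime of residue degree $1$. For each of $\phi_2=x+1$ and $\phi_3=x^2+x+1$ (both of multiplicity $2$ in $\bar f$), I would compute the first-order $\phi_i$-Newton polygon from the principal part of the $\phi_i$-adic expansion: for $\phi_2$ the Taylor coefficients $q_0=f(-1)=-1-a+b$, $q_1=f'(-1)=7+a$, $q_2=f''(-1)/2=-21$, and for $\phi_3$ the quotients $Q_0=(a+1)x+b$, $Q_1=-4x-2$, $Q_2=3x+5$ obtained by iterated division. Ore's theorem then converts each segment and its residual polynomial over $\mathbb{F}_2$ or $\mathbb{F}_4$ into a splitting contribution. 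Matching the polygon shapes that produce one additional degree-$1$ prime from $\phi_2$ (pushing the degree-$1$ count to three) or two degree-$2$ primes from $\phi_3$ yields conditions (1)--(3); the parallel $x$-Newton polygon analysis under $\bar f=x^7$, with vertices $(0,v_2(b)),(1,v_2(a)),(7,0)$ and residual polynomials over $\mathbb{F}_2$, delivers (7)--(8).

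The principal obstacle lies in cases (4)--(6), where $a\equiv 1\pmod 4$ and $b\equiv 2\pmod 4$ force the first-order $\phi_2$-residual polynomial to be inseparable over $\mathbb{F}_2$ and Ore's theorem becomes inconclusive. Here I would invoke the Guardia--Montes--Nart second-order Newton polygon algorithm: the discriminant relation (\ref{eq2}) identifies $u=\frac{v_2(D)-6}{2}$ as the slope datum of the first-order polygon, and $\rho=2^{u}+\frac{7b}{6a}$ is the Ore lift of the inseparable residual root of $\phi_2$ against which the second-order polygon is attached. The valuation $v_2(f(\rho))$ then governs its geometry: the three regimes ``$v_2(D)$ odd'', ``$v_2(f(\rho))=2u+1$'' and ``$v_2(f(\rho))\geq 2u+3$'' correspond respectively to a non-integer second-order slope, an integer second-order slope with separable residual that splits, and a still deeper higher-order split, each producing one more degree-$1$ prime and thereby crossing the threshold $N_1(2)=2$. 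Matching each regime to Hensel's criterion yields (4)--(6), and assembling the splitting analyses across all residue classes establishes the converse: outside the union (1)--(8), the splitting of $2\mathbb{O}_K$ always respects Hensel's bounds, and $2\nmid i(K)$.
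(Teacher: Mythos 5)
Your overall framework is the same as the paper's: the Hensel-type criterion (Lemma~\ref{Th 1.12}) with the thresholds two monic linear and one monic quadratic irreducible over $\F_2$, a case split according to $\bar f \bmod 2$, and Ore/Newton-polygon analysis of the factors $x$, $x+1$, $x^2+x+1$ (resp.\ $x^7$), with a Montes-type refinement where Ore's theorem is inconclusive; your treatment of (4)--(6) via a refined datum attached to the inseparable residual root is in substance the paper's replacement of $x+1$ by $x-\mu$, $\mu=-7b/6a$, and then by $x-\rho$. The genuine gap is your claim that conditions (7)--(8) are ``delivered'' by the \emph{first-order} $x$-Newton polygon with its residual polynomials. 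That cannot work: in the subcase $a\equiv b\equiv 0\pmod 2$ with $7v_2(a)<6v_2(b)$, the first edge has slope $v_2(a)/6$, and precisely when $v_2(a)\in\{2,4\}$ its residual polynomial is $(Y+\bar 1)^2$, so $f$ is not $2$-regular and Ore gives no factorization. Moreover the first-order data consist only of $v_2(a)$, $v_2(b)$ and residual coefficients that are constant on these classes, so no first-order polygon can separate $a\equiv 28\pmod{32}$ from $a\equiv 12\pmod{32}$, or $a\equiv 48\pmod{64}$, $b\equiv 0\pmod{128}$ from its neighbours --- yet exactly these distinctions constitute (7) and (8). The paper has to introduce key polynomials $\Phi(x)=x^3+2$, $x^3+2x+2$ (for $v_2(a)=2$) and $x^3+4$, $x^3+4x+4$ (for $v_2(a)=4$), the associated second-order valuation $V$, and second-order polygons and residual polynomials; without this machinery neither the ``if'' nor the ``only if'' direction of (7)--(8) can be established, so your plan as written fails for the case $a,b$ both even.

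A secondary but consequential inaccuracy is your bookkeeping in the case $a$ odd, $b$ even. You assume degree-$2$ primes arise only from the $(x^2+x+1)^2$ factor and that ``one additional degree-$1$ prime'' from $x+1$ reaches the threshold. In fact the factor $x$ contributes exactly one degree-$1$ prime, so you need \emph{two} further degree-$1$ primes; and the $(x+1)^2$ factor can itself contribute a degree-$2$ prime, namely when its polygon has an integer-slope edge of length $2$ whose residual polynomial is $Y^2+Y+\bar 1$ --- this possibility is exactly what the paper uses, e.g.\ for $a\equiv 3\pmod 8$, $b\equiv 0\pmod 8$, and what distinguishes $b\equiv 4\pmod 8$ with $a\equiv 3$ versus $a\equiv 7\pmod 8$. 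The correct reduction is that $2\mid i(K)$ iff the $x+1$ and $x^2+x+1$ parts \emph{together} contribute either two more degree-$1$ primes or two primes of degree $2$, and the delicate matching of polygon shapes and residual polynomials (over $\F_2$ and over $\F_2[x]/\langle x^2+x+1\rangle$) to this combined count is precisely the content of conditions (1)--(3); your sketch elides it, so even in the first-order cases the stated criterion you would be matching against is not the right one.
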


 \begin{theorem}\label{Th1.2}
Let $K=\Q(\theta)$ be an algebraic number field with $\theta$ a root of an irreducible polynomial $f(x)=x^7+ax+b $. Then $3\mid i(K)$ if and only if one of the following hold:
\begin{enumerate}
	\item $a\equiv 2\mod 9$, $v_3(b)=1$, $b_3\equiv 1\mod 3$, $v_3(a+7)=2$ and $v_3(b-a-1)\ge 3$.
	\item  $a\equiv 2\mod 9$, $v_3(b)=1$, $b_3\equiv 1\mod 3$, $v_3(a+7)\ge3$, $v_3(b-a-1)\ge 3$, $2v_3(a+7)>v_3(b-a-1)+1$ and $(b-a-1)_3\equiv -1\mod 3.$
	\item $a\equiv 2\mod 9$, $v_3(b)=1$,  $b_3\equiv 1\mod 3$, $v_3(a+7)\ge3$, $v_3(b-a-1)\ge 3$ and $2v_3(a+7)<v_3(b-a-1)+1$.
	\item $a\equiv 2\mod 9$, $v_3(b)=1$,  $b_3\equiv -1\mod 3$, $v_3(a+7)=2$ and $v_3(b+a+1)\ge 3$.
	\item $a\equiv 2\mod 9$, $v_3(b)=1$,  $b_3\equiv -1\mod 3$, $v_3(a+7)\ge3$, $v_3(b+a+1)\ge 3$, $2v_3(a+7)>v_3(a+b+1)+1$ and $(b+a+1)_3\equiv -1\mod 3.$
	\item $a\equiv 2\mod 9$, $v_3(b)=1$,  $b_3\equiv -1\mod 3$, $v_3(a+7)\ge3$, $v_3(b+a+1)\ge 3$ and $2v_3(a+7)=v_3(a+b+1)+1$.
	\item  $a\equiv 5\mod 9$ and $v_3(b)=1$.
	\item $v_3(a+1)>1$, $v_3(b)>1$ and  $((a+1)_3,b_3)\in \{(-1,1), (1,-1)\}\mod3$.
	\item $v_3(a+1)>1$, $v_3(b)>1$  and $((a+1)_3,b_3)\in \{(-1,-1), (1,1)\}\mod3$.

\end{enumerate}

\end{theorem}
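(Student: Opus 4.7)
The plan is to combine the Hensel--Dedekind criterion on common index divisors with Ore's theorem on $\phi$-Newton polygons, passing to higher-order Newton polygons (as in Gu\`ardia--Montes--Nart) when the first-order polygon is not regular. Recall the criterion: $p \mid i(K)$ iff for some positive integer the number of prime ideals of $\mathcal{O}_K$ above $p$ with that residue degree exceeds the number of monic irreducibles of that degree in $\F_p[y]$. With $p = 3$ and $\deg K = 7$, $\F_3[y]$ has three monic irreducibles of degree one, and since $\sum_i e_i f_i = 7$ (sum over primes $\mathfrak{p}_i \mid 3$) the analogous condition for residue degrees $\geq 2$ cannot be violated (four primes of residue degree two would demand $\sum e_i f_i \geq 8$). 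Hence $3 \mid i(K)$ iff $3 \mathcal{O}_K$ has at least four prime ideals of residue degree one.

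Next I would enumerate the possibilities for $\bar f(x) \in \F_3[x]$. Primes of residue degree one above $3$ arise only from linear factors of $\bar f$. Using $x^6 - 1 \equiv (x-1)^3 (x+1)^3 \pmod 3$, the only reduction with three distinct linear factors is $\bar a = 2,\ \bar b = 0$, giving $\bar f(x) = x (x-1)^3 (x+1)^3$; in every other sub-case $\bar f$ has at most one linear factor, and a direct (possibly higher-order) Newton polygon analysis shows at most three primes of residue degree one can appear above $3$. Hence all cases (1)--(9) must lie in the regime $a \equiv 2 \pmod 3$ and $3 \mid b$, which is indeed enforced by the stated hypotheses.

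The main step is to compute the $\phi$-Newton polygons of $f$ at $\phi \in \{x,\ x-1,\ x+1\}$ in this regime. At $\phi = x$ the polygon is a single side of length one and slope $-v_3(b)$, yielding one unramified prime of residue degree one. At $\phi = x - 1$, substituting $x = y+1$ places the $\phi$-Newton polygon among the points $(0, v_3(a+b+1))$, $(1, v_3(a+7))$, $(2, 1)$, $(3, 0)$; similarly at $\phi = x+1$ via $x = y-1$ it runs among $(0, v_3(b-a-1))$, $(1, v_3(a+7))$, $(2, 1)$, $(3, 0)$. Its shape is governed by the three valuations $v_3(a+7)$, $v_3(a+b+1)$, $v_3(b-a-1)$, which are in turn determined by $a \bmod 9$, $v_3(b)$, $b_3 \bmod 3$, and $(a+1)_3 \bmod 3$. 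The case analysis then unfolds: cases (1)--(6) correspond to $a \equiv 2 \pmod 9$ and $v_3(b) = 1$, subdivided by $b_3 \bmod 3$ and by the comparison of $2 v_3(a+7)$ with $v_3(b \pm a \pm 1) + 1$; case (7) corresponds to $a \equiv 5 \pmod 9$ and $v_3(b) = 1$, where $v_3(a+7) = 1$ forces shallower slopes; cases (8)--(9) correspond to $v_3(a+1),\ v_3(b) > 1$, where each polygon at $x \pm 1$ has two sides, of slopes $\leq -1$ and $-1/2$, distinguished by $((a+1)_3, b_3) \bmod 3$. In each case, Ore's theorem produces the exact count of residue-degree-one primes, which reaches at least four precisely under the listed conditions.

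The main technical obstacle will be handling sub-cases where a Newton side carries a non-squarefree residual polynomial; Ore's first-order theorem then fails to fully resolve the factorization, and one must compute the second-order Newton polygon to extract the residual polynomial at the next level. This is most delicate in cases (2), (3), (5), (6), where a single slope-$(-1)$ side of length three at $\phi = x \pm 1$ carries a residual cubic of the form $y^3 + y^2 + 2 u_1 y + 2 u_0 \in \F_3[y]$ with $u_0, u_1$ the leading units of $c_0$ and $a+7$ respectively; whether this cubic splits fully, splits as a linear times an irreducible quadratic, is irreducible, or is non-squarefree (requiring second-order analysis) is precisely what the detailed valuation comparisons in the theorem's hypotheses discriminate among. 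The converse direction is handled in parallel: any configuration outside (1)--(9) produces at most three primes of residue degree one, so $3 \nmid i(K)$.
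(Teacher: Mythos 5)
Your overall framework coincides with the paper's: the common--index--divisor criterion (Lemma \ref{Th 1.12}), Ore's theorem applied to the $\phi$-Newton polygons at $\phi\in\{x,\,x-1,\,x+1\}$, and higher-order polygons when a residual polynomial is not separable. Your opening reduction is correct and in fact tidier than the paper's bookkeeping: since four primes of residual degree $2$ would force $\sum e_if_i\ge 8>7$, for $p=3$ and $n=7$ the criterion collapses to ``at least four primes of residual degree one,'' and only the regime $a\equiv 2$, $b\equiv 0\pmod 3$, where $\bar f=x(x-1)^3(x+1)^3$, can produce them. The identification of the governing data $v_3(a+7)$, $v_3(b-a-1)$, $v_3(b+a+1)$ from the Taylor expansions at $\pm1$ is also the same as in the paper.

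The genuine gap is that the proposal stops exactly where the content of the theorem begins: conditions (1)--(9) \emph{are} the outcome of the case-by-case polygon computations, and you assert rather than perform them (``the case analysis then unfolds,'' ``Ore's theorem produces the exact count \dots precisely under the listed conditions'' is the statement to be proved, not an argument). Two concrete symptoms. First, for $a\equiv b\equiv 0\pmod 3$ you claim at most three degree-one primes by ``a direct (possibly higher-order) Newton polygon analysis'' but give none; ruling out four degree-one primes there requires genuine second-order work (key polynomials $x^2\pm 3$, the second-order valuation $V$, and the attached residuals), which is a substantial sub-proof in the paper. Second, your description of the delicate sub-cases is inaccurate: in cases (2), (3), (5), (6) the hypotheses force $v_3(a+7)\ge 3$, so the middle lattice point contributes $0$ (not a unit $u_1$) to any residual polynomial; a single slope-one side of length three occurs only in the boundary situation $v_3(b\mp a\mp 1)=3$; generically these cases give a length-one side plus a length-two side, and the paper settles the entire $a\equiv 2\pmod 3$ branch with first-order residuals such as $(Y-\bar1)(Y+\bar1)$, $Y^2+\bar1$, $Y^2\pm Y-\bar1$ together with the comparison of $2v_3(a+7)$ with $v_3(b\mp a\mp1)+1$ -- no second-order polygons are needed there, contrary to where you locate the difficulty. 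Whether a side has integral or half-integral slope, and whether the residual quadratic splits, is separable, or is irreducible, is precisely what separates $3\mid i(K)$ from $3\nmid i(K)$ in these boundary cases, so leaving those computations unexamined (and partly misdescribed) is a real gap rather than a routine omission.
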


 The following  corollary follow immediately from the above theorem.
   \begin{corollary}\label{cor2}
         Let $K=\Q(\theta)$ where $\theta$  satisfies the  irreducible polynomial $x^7+ax+b$. Then  $K$ is non-monogenic, if one of the following hold:
         \begin{enumerate}
        \item $a\equiv 5\mod 9$ and $b\equiv 3 \mod 9$.
         \item $a\equiv 5\mod 9$ and $b\equiv 6 \mod 9$.
        \end{enumerate}
         \end{corollary}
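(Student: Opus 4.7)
The plan is to derive this corollary as a direct specialization of Theorem 1.2, specifically its case (7), which asserts that $3 \mid i(K)$ whenever $a \equiv 5 \pmod 9$ and $v_3(b) = 1$. Since the presence of any prime divisor of $i(K)$ obstructs the existence of a power integral basis, it is enough to establish $3 \mid i(K)$ to conclude that $K$ is non-monogenic.

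To handle the first hypothesis $a \equiv 5 \pmod 9$ and $b \equiv 3 \pmod 9$, I would write $b = 9k + 3 = 3(3k+1)$ with $\gcd(3k+1, 3) = 1$, which gives $v_3(b) = 1$ exactly. For the second hypothesis $a \equiv 5 \pmod 9$ and $b \equiv 6 \pmod 9$, the analogous factorization $b = 9k + 6 = 3(3k+2)$ with $\gcd(3k+2, 3) = 1$ again yields $v_3(b) = 1$. Combined with the congruence $a \equiv 5 \pmod 9$, which is given in both hypotheses, these elementary observations verify the full hypothesis of condition (7) of Theorem~\ref{Th1.2}.

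Applying Theorem~\ref{Th1.2} therefore produces $3 \mid i(K)$ in both cases. To close the argument, I would spell out the standard deduction: if $K$ were monogenic, say $\mathbb{O}_K = \Z[\alpha]$ for some $\alpha \in \mathbb{O}_K$, then $\ind \alpha = 1$, which forces the gcd $i(K) = \gcd\{\ind \alpha : K = \Q(\alpha),\ \alpha \in \mathbb{O}_K\}$ to divide $1$, contradicting $3 \mid i(K)$. Hence $K$ is non-monogenic in each of the two listed cases.

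Since the corollary is essentially a bookkeeping reduction to case (7) of Theorem~\ref{Th1.2}, there is no substantial obstacle here; the only nuance is the translation of the mod-$9$ conditions on $b$ into the $3$-adic valuation language used in the theorem's statement, which is entirely elementary.
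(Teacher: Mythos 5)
Your proposal is correct and matches the paper's intent exactly: the paper gives no separate proof, stating only that the corollary follows immediately from Theorem~\ref{Th1.2}, and your reduction of the mod-$9$ conditions on $b$ to $v_3(b)=1$ (so that case (7) applies) together with the observation that $3\mid i(K)$ forces non-monogenicity is precisely that deduction.
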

     \begin{theorem}\label{Th1.3}
     	Let $K=\Q(\theta)$ be an algebraic number field with $\theta$ a root of an irreducible polynomial $f(x)=x^7+ax+b\in \Z[x]$. Let  $p\ge5$ be a rational prime, then $p\nmid i(K)$.
     \end{theorem}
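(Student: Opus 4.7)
The plan rests on the classical common-index-divisor criterion of Hensel: a rational prime $p$ divides $i(K)$ if and only if there is some $f \geq 1$ such that the number $N_f$ of primes of $\mathbb{O}_K$ above $p$ with residue degree $f$ strictly exceeds $I_f(p)$, the number of monic irreducible polynomials of degree $f$ in $\mathbb{F}_p[x]$. Since $[K:\mathbb{Q}] = 7$ gives $N_f \leq 7/f$, one has $I_f(p) \geq 10 > 3 \geq N_f$ for $f \geq 2$ and $p \geq 5$, and $I_1(p) = p \geq 7 \geq N_1$ for $p \geq 7$. Thus the theorem reduces to showing $N_1 \leq 5$ for the prime $p = 5$.

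My first step bounds $N_1$ by the total multiplicity $s$ of linear factors of $\bar f(x) = x^7 + \bar a x + \bar b$ in $\mathbb{F}_5[x]$. Each prime $\mathfrak{p} \mid 5$ of residue degree $1$ corresponds, under the factorization of $f$ over $\mathbb{Q}_5$, to an irreducible factor whose reduction modulo $5$ has the form $(x - \bar\alpha)^{e_\mathfrak{p}}$, contributing at least $1$ to the multiplicity of $x - \bar\alpha$ in $\bar f$; summing gives $N_1 \leq s$. Now $s$ cannot equal $6$: the remaining degree-$1$ cofactor of $\bar f$ would itself be linear, forcing $s = 7$. So $s \in \{0,1,\ldots,5,7\}$, and in the case $s \leq 5$ we are already done.

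If $s = 7$, then $\bar f = \prod_{i=1}^{7}(x - \gamma_i)$ with $\gamma_i \in \mathbb{F}_5$, and the vanishing of the coefficients of $x^2, \ldots, x^6$ in $\bar f$ gives $e_1(\gamma) = \cdots = e_5(\gamma) = 0$. By Newton's identities, the power sums $p_k(\gamma) = \sum_i \gamma_i^k$ vanish for $k = 1, \ldots, 5$. Letting $n_j = \#\{i : \gamma_i = j\}$, invertibility of the $4 \times 4$ Vandermonde matrix $(j^k)_{1 \leq k, j \leq 4}$ over $\mathbb{F}_5$ forces $n_j \equiv 0 \pmod 5$ for $j = 1,2,3,4$; together with $\sum n_j = 7$ and $n_j \geq 0$, either $n_0 = 7$, or $n_0 = 2$ and $n_{j_0} = 5$ for a unique $j_0 \neq 0$. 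The second alternative would give $\bar f = x^2(x - j_0)^5 \equiv x^7 - j_0 x^2 \pmod 5$, whose nonzero $x^2$-coefficient contradicts the trinomial form of $f$. Hence $\bar f = x^7$, equivalently $5 \mid a$ and $5 \mid b$.

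For this remaining case I would invoke Ore's theorem via the $5$-adic Newton polygon of $f$, the lower convex hull of $\{(0, v_5(b)), (1, v_5(a)), (7, 0)\}$, which consists of either one segment of length $7$ or two segments of lengths $1$ and $6$. For the single segment of slope $-v_5(b)/7$: an integral slope would require $v_5(b) \geq 7$ and $v_5(a) \geq 6$, contradicting \eqref{1}; a non-integral slope has denominator $7$, yielding a single prime with ramification index $7$ and residue degree $1$, so $N_1 = 1$. For the two-segment polygon, the length-$1$ segment contributes at most one prime of residue degree $1$, while the length-$6$ segment of slope $-v_5(a)/6 = -h/e$ (lowest terms, $e \mid 6$) has residual polynomial of degree $6/e$ over $\mathbb{F}_5$. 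Assumption \eqref{1} rules out $e = 1$ (which would force $v_5(a) \geq 6$ and $v_5(b) \geq 7$); for $e \in \{2,3,6\}$ the residual polynomial has degree at most $3$, contributing at most $3$ primes of residue degree $1$. Hence $N_1 \leq 1 + 3 = 4$ in this subcase. The main technical obstacle is precisely this Ore/Newton-polygon case analysis, where one must correlate the slope denominator with the residual polynomial degree and invoke \eqref{1} at exactly the right places to exclude the integer-slope configurations in which $N_1$ could otherwise grow beyond $5$.
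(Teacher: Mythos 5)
Your proposal is correct, and for the decisive prime $p=5$ it follows a genuinely different route from the paper, although both arguments share the same skeleton: the criterion of Lemma \ref{Th 1.12}, the reduction to counting residue-degree-one primes above $5$ (degrees $f\ge 2$ and primes $p\ge 7$ being killed by the trivial counts), and a first-order Newton polygon analysis in the totally ramified-looking case $a\equiv b\equiv 0\pmod 5$. The paper instead begins with the necessary condition $5\mid D$, i.e. $a^7\equiv 2b^6\pmod 5$, enumerates the five residue pairs $(a,b)\bmod 5$, and explicitly factors $\overline{f}$ in each nonzero case to list the possible splitting types of $5\mathbb{O}_K$; you bypass the discriminant entirely by bounding the number of degree-one primes by the multiplicity $s$ of linear factors of $\overline{f}$, ruling out $s=6$ trivially, and showing via Newton's identities plus the invertibility of the scaled Vandermonde matrix that $s=7$ forces $\overline{f}=x^7$ (the alternative $x^2(x-j_0)^5=x^7-j_0x^2$ being incompatible with the trinomial shape). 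Your route buys several things: no explicit factorizations of quartics modulo $5$, a uniform treatment of all $p\ge 7$ (the paper writes out only $p=7$, though its counting extends verbatim), and — since you only use the bound that an edge of length $\ell$ and slope denominator $e$ yields at most $\ell/e$ primes — complete indifference to whether residual polynomials are separable, so no regularity discussion or second-order machinery is ever needed; the applications of assumption (\ref{1}) to exclude the integral-slope configurations are placed correctly in both the one-edge and two-edge cases. What the paper's route buys is explicit splitting data for $5\mathbb{O}_K$ in each residue class, in the spirit of Theorems \ref{Th1.1} and \ref{Th1.2}, whereas your argument yields only the inequality needed here. The one standard fact you lean on without proof — that a monic irreducible polynomial over $\Z_5$ reduces modulo $5$ to a power of a single irreducible polynomial whose degree divides the residue degree of the associated local extension — is classical, but you should cite or prove it, since it is exactly what justifies $N_1\le s$.
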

 \begin{remark}
 	Note that when $a=0$ and $b$ is a non-zero  odd integer, then the index of  $K$ is $1.$
 \end{remark}
 \begin{corollary}
 	Let $K=\Q(\theta)$ be an algebraic number field generated by a root  $\theta$ of an irreducible polynomial $f(x)=x^7+ax+b\in \Z[x].$ If $a\equiv 15\mod 72$ and $b\equiv 12 \mod 72$, then in view of Theorem \ref{Th1.1}, \ref{Th1.2}  and \ref{Th1.3}, we have $i(K)=1$
 \end{corollary}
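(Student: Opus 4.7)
The plan is to verify that, under the given congruences, none of the divisibility conditions listed in Theorems \ref{Th1.1} and \ref{Th1.2} can hold, after which Theorem \ref{Th1.3} handles every prime $p\ge 5$ automatically. First I would unpack the hypothesis via the Chinese Remainder Theorem: $a\equiv 15\mod 72$ forces $a\equiv 7\mod 8$ and $a\equiv 6\mod 9$, while $b\equiv 12\mod 72$ forces $b\equiv 4\mod 8$ and $b\equiv 3\mod 9$. In particular $a$ is odd and $v_3(a)=1$, so the assumption (\ref{1}) is trivially satisfied at both $p=2$ and $p=3$, and the theorems apply.

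For $p=2$, I would walk through the eight clauses of Theorem \ref{Th1.1}. Since $a\equiv 3\mod 4$ (from $a\equiv 7\mod 8$), clauses (3)--(6) are immediately excluded, as each requires $a\equiv 1\mod 4$. Clause (1) fails because it requires $b\equiv 0\mod 8$ whereas $b\equiv 4\mod 8$; clause (2) fails because it requires $a\equiv 3\mod 8$ whereas $a\equiv 7\mod 8$. For clauses (7) and (8), writing $b=12+72m$ yields $b\equiv 12+8m\mod 32$, which is never divisible by $32$ (let alone by $128$), so neither of these last two clauses can be satisfied. Hence $2\nmid i(K)$.

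For $p=3$, I would run through the nine clauses of Theorem \ref{Th1.2}. Clauses (1)--(6) each require $a\equiv 2\mod 9$ and clause (7) requires $a\equiv 5\mod 9$; both fail since $a\equiv 6\mod 9$. Clauses (8) and (9) require $v_3(a+1)>1$, but $a+1\equiv 7\mod 9$ is not even divisible by $3$. Hence $3\nmid i(K)$.

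Combining these two checks with Theorem \ref{Th1.3}, which yields $p\nmid i(K)$ for every rational prime $p\ge 5$, we conclude $i(K)=1$. No genuine obstacle arises: the argument is a finite residue-class verification, and the only care required is to ensure that the congruences modulo $8$, $9$, $32$, and $128$ propagate correctly from the modulo-$72$ hypotheses on $a$ and $b$.
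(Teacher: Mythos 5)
Your verification is correct and follows exactly the route the paper intends: the corollary is stated as an immediate consequence of Theorems \ref{Th1.1}, \ref{Th1.2} and \ref{Th1.3}, and your residue-class check (reducing $a\equiv 15$, $b\equiv 12 \bmod 72$ modulo $8$, $9$, $32$, $64$, $128$ and ruling out every clause of the first two theorems) is precisely the implicit verification. Nothing is missing.
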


      We now provide some examples of non-monogenic number fields.
      \begin{example}
      Let $K=\Q(\theta)$ where $\theta$ is a root of $f(x)=x^7+17x+51$. Note that $f(x)$ satisfies Eisenstein's criterion with respect to $17$, hence it is irreducible over $\mathbb{Q}$. So, by Corollary \ref{cor2} $K$ is non-monogenic. 
        \end{example}
          
          \begin{example}
              Let $K=\Q(\theta)$ with $\theta$ satisfying $f(x)=x^7+7x+56$. It can be easily seen that $f(x)$ is $7$-Eisenstein. Hence, in view of Corollary \ref{cor1} $K$ is non-monogenic.
                \end{example}

 \section {Preliminary Results}
 Let $K=\Q(\theta)$ be an algebraic number field with $\theta$ a root of a monic irreducible polynomial $f(x)$ belonging to $\Z[x]$. In what follows, $\mathbb{O}_K$ will stand for the ring of algebraic integers of $K$.  For a rational prime $p$, let $\F_p$ denote the finite field with $p$ elements. The following  lemma (cf. \cite[Theorem 2.2]{hs}) will play an important role in the proof of Theorems \ref{Th1.1}, \ref{Th1.2} and \ref{Th1.3}. 
 \begin{lemma}\label{Th 1.12} Let $K$ be an algebraic number field and $p$ be a rational prime. Then  $p$ is a  common index divisor of $K$ if and only if for some positive integer $h$, the number of distinct  prime ideals of $\mathbb{O}_K$ lying above $p$ having residual degree $h$ is greater than the number of monic irreducible polynomials of degree $h$ in $\F_p[x]$. \end{lemma}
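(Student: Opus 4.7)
The plan is to establish this classical Hensel--Dedekind criterion by combining a Dedekind--Kummer-type local factorization with the Chinese Remainder Theorem. Fix the decomposition $p\mathbb{O}_K=\prod_{i=1}^{r}\mathfrak{p}_i^{e_i}$ with residue degrees $f_i$. For any $\alpha\in\mathbb{O}_K$ with $K=\Q(\alpha)$ and minimal polynomial $m_\alpha(x)\in\Z[x]$, the key reformulation I would prove first is: $p\nmid[\mathbb{O}_K:\Z[\alpha]]$ if and only if (i) for each $i$ the residue $\bar\alpha_i$ of $\alpha$ in $\mathbb{O}_K/\mathfrak{p}_i\cong\F_{p^{f_i}}$ generates this field over $\F_p$ (equivalently, its $\F_p$-minimal polynomial $\phi_i(x)$ has degree exactly $f_i$), and (ii) the polynomials $\phi_1,\ldots,\phi_r$ are pairwise distinct. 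This reformulation follows by comparing $\mathbb{O}_K\otimes_\Z\Z_p\cong\prod_i(\mathbb{O}_K)_{\mathfrak{p}_i}$ with $\Z[\alpha]\otimes_\Z\Z_p\cong\Z_p[x]/(m_\alpha(x))$: the index is prime to $p$ exactly when these two $\Z_p$-orders coincide, which via Hensel's lemma translates into the factorization $\bar m_\alpha(x)=\prod_i\phi_i(x)^{e_i}$ and hence into (i) and (ii).

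Granting this reformulation, the forward implication becomes a pigeonhole argument. Suppose for some $h$ the number $r_h$ of primes above $p$ with residue degree $h$ strictly exceeds the number $N_h$ of monic irreducibles of degree $h$ in $\F_p[x]$. For any $\alpha$ generating $K$, consider the $\phi_i$ attached to the primes $\mathfrak{p}_i$ of residue degree $h$. Each such $\phi_i$ has degree dividing $h$, so either some $\phi_i$ has degree strictly less than $h$ (violating (i)), or all the $\phi_i$ belong to the $N_h$-element set of degree-$h$ irreducibles and two must coincide since $r_h>N_h$ (violating (ii)). In either case $p\mid[\mathbb{O}_K:\Z[\alpha]]$, and since $\alpha$ was arbitrary, $p\mid i(K)$.

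For the converse, assume $r_h\leq N_h$ for every $h$. Choose pairwise distinct monic irreducibles $\phi_1,\ldots,\phi_r\in\F_p[x]$ with $\deg\phi_i=f_i$, select a root $\beta_i\in\mathbb{O}_K/\mathfrak{p}_i$ of $\phi_i$, and invoke CRT in $\mathbb{O}_K$ to obtain $\alpha_0\in\mathbb{O}_K$ with $\alpha_0\equiv\beta_i\pmod{\mathfrak{p}_i}$ for every $i$. To guarantee $K=\Q(\alpha_0)$, replace $\alpha_0$ by $\alpha_0+p\gamma$ where $\gamma$ varies over a $\Z$-linear family containing a primitive element of $K$; all but finitely many such perturbations remain primitive (by a standard Vandermonde/primitive-element argument) while preserving each residue mod $\mathfrak{p}_i$. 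For this $\alpha$ both conditions (i) and (ii) hold, so $p\nmid[\mathbb{O}_K:\Z[\alpha]]$, and therefore $p\nmid i(K)$.

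The main obstacle is the initial reformulation in terms of (i)--(ii): one must carefully reconcile the factorization of $\bar m_\alpha(x)$ in $\F_p[x]$ with the prime decomposition of $p\mathbb{O}_K$, especially when ramification is present ($e_i>1$), and show that failure of either condition actually forces $p$ into the index $[\mathbb{O}_K:\Z[\alpha]]$. Once this bridge is secured, both directions reduce cleanly to a pigeonhole count and a CRT construction.
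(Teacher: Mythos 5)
Your forward (pigeonhole) direction is fine, because it only uses the true half of your reformulation: if $p\nmid[\mathbb{O}_K:\Z[\alpha]]$ then, by Dedekind--Kummer, the residues $\bar\alpha_i$ generate the residue fields and the associated irreducibles $\phi_i$ are pairwise distinct. The genuine gap is in the converse, which rests on the false half: conditions (i) and (ii) do \emph{not} imply $p\nmid[\mathbb{O}_K:\Z[\alpha]]$ once ramification is present. Concretely, take $K=\Q(\sqrt{2})$, $p=2$, $\alpha=2\sqrt{2}$: there is a single prime $\mathfrak{p}$ above $2$ with residual degree $1$, so (i) and (ii) hold trivially, and $\bar m_\alpha(x)=x^2$ even has the ``correct'' shape $\phi^{e}$ with $e=2$, $f=1$; nevertheless $[\mathbb{O}_K:\Z[\alpha]]=2$. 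Equality of the $\Z_p$-orders $\Z_p[x]/(m_\alpha(x))$ and $\mathbb{O}_K\otimes_{\Z}\Z_p$ is strictly stronger than the residual factorization $\bar m_\alpha=\prod_i\phi_i^{e_i}$; Hensel's lemma controls the reduction mod $p$, not the index. Consequently your CRT construction, which prescribes $\alpha$ only modulo each $\mathfrak{p}_i$, does not guarantee $p\nmid[\mathbb{O}_K:\Z[\alpha]]$ whenever some $e_i>1$, and this is exactly the case the lemma is used for in the paper.

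Note that the paper does not prove the lemma at all: it quotes it (Theorem 2.2 of the cited paper of H.~Smith), and the classical proofs (Hensel, Engstrom, Smith) repair precisely the step above. The fix is to strengthen the congruences in your construction: for each $i$ with $e_i>1$ choose $\pi_i\in\mathfrak{p}_i\setminus\mathfrak{p}_i^{2}$ and demand $\alpha\equiv\beta_i+\pi_i \pmod{\mathfrak{p}_i^{2}}$, so that in addition to (i) and (ii) one has $v_{\mathfrak{p}_i}(\phi_i(\alpha))=1$ at every ramified prime; these three conditions together do force $p\nmid[\mathbb{O}_K:\Z[\alpha]]$, e.g.\ by Dedekind's criterion, or by the Ore/Newton-polygon regularity machinery that the paper uses elsewhere, or by checking that $\Z_p[\alpha]$ surjects onto each completion $\mathbb{O}_{K_{\mathfrak{p}_i}}$. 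Your primitivity perturbation $\alpha\mapsto\alpha+p\gamma$ is still compatible with the strengthened congruences, since $p\in\mathfrak{p}_i^{e_i}\subseteq\mathfrak{p}_i^{2}$ at ramified primes and only the residue mod $\mathfrak{p}_i$ matters at unramified ones. With that modification your argument becomes the standard proof; as written, the converse direction does not go through.
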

 We shall first introduce  the notion of Gauss valuation, $\phi$-Newton polygon and Newton polygon of second order, where $\phi(x)$  belonging to $\Z_p[x]$ is a monic polynomial with $\overline{\phi}(x)$ irreducible over $\F_p$.
 
 \begin{definition}\label{A} 
The Gauss valuation  of the field $\Q_p(x)$ of rational functions in an indeterminate $x$ which extends the valuation $v_p$ of $\Q_p$ and is defined on $\Q_p[x]$ by 
\begin{equation*}\label{Gau}
 v_{p,x}(\ a_0+a_1x+a_2x^2+.....+a_sx^s)= \displaystyle\min\{v_p(a_i),1\leq i\leq s \}, a_i\in \Q_p.
  \end{equation*}
\end{definition}
 \begin{definition}\label{p1.6}
Let $p$ be a rational  prime. Let $\phi(x)\in\Z_p[x]$ be a monic polynomial which is irreducible modulo $p$ and $f(x)\in\Z_p[x]$ be a monic polynomial not divisible by $\phi(x)$. Let $\displaystyle\sum_{i=0}^{n}a_i(x)\phi(x)^i$ with $\deg a_i(x)<\deg\phi(x)$, $a_n(x)\neq 0$ be the $\phi(x)$-expansion of $f(x)$ obtained on dividing it by the successive powers of $\phi(x)$. Let $P_i$ stand for the point in the plane having coordinates $(i,v_{p,x}(a_{n-i}(x)))$ when $a_{n-i}(x)\neq 0$, $0\leq i\leq n$. Let $\mu_{ij}$ denote the slope of the line joining the point $P_i$ with $P_j$ if $a_{n-i}(x)a_{n-j}(x)\neq 0$.  Let $i_1$ be the largest positive index not exceeding $n$ such that\\
 \centerline{$\mu_{0i_1}=\min\{~\mu_{0j}~|~0<j\leq n,~a_{n-j}(x)\neq0\}.$}
 
 \noindent
 If $i_1<n,$ let $i_2$ be the largest index such that $i_1<i_2\leq n$ with\\
 \centerline{$\mu_{i_1i_2}=\min\{~\mu_{i_1j}~|~i_1<j\leq n,~a_{n-j}(x)\neq0\}$} 
  and so on. The $\phi$-Newton polygon of $f(x)$ with respect to $p$ is the polygonal path having segments
 $P_{0}P_{i_1},P_{i_1}P_{i_2},\ldots,P_{i_{k-1}}P_{i_k}$ with $i_k=n$. These segments are called the edges of the $\phi$-Newton polygon and their slopes form a strictly increasing sequence; these slopes are non-negative as $f(x)$ is a monic polynomial with coefficients in $\Z_p$.
  \end{definition}
  %\begin{example} Consider $\phi(x)=x^2+1$. We determine the $\phi$-Newton polygon of the polynomial $f(x)=(x^2+1)^3+(27x+9)(x^2+1)^2+(81x+12)(x^2+1)+27x+81$ with respect to $3$. The $\phi$-Newton polygon of $f(x)$ with respect to $3$ being the lower convex hull of the points $(0,~ 0)$, $(1,~ 2)$, $(2,~ 1)$ and $(3,~ 3)$ has two edges; the first edge  is the line segment joining the point  $(0,~0)$ with $(2,~ 1)$ and the second edge is the line segment joining  $(2,~ 1)$ with $(3, ~3)$.
  % \end{example}
   \begin{definition}\label{p1.10}
   Let $\phi(x) \in\Z_p[x]$  be a monic polynomial which is irreducible modulo a rational prime $p$ having a root $\alpha$ in the algebraic closure $\widetilde{\Q}_{p}$ of $\Q_p$. Let $f(x) \in \Z_p[x]$ be a monic polynomial not divisible by $\phi(x)$ with $\phi(x)$-expansion $\phi(x)^n + a_{n-1}(x)\phi(x)^{n-1} + \cdots + a_0(x)$ such that $\overline{f}(x)$ is a power of $\overline{\phi}(x)$. Suppose that the $\phi$-Newton polygon of $f(x)$  consists of a single edge, say $S$, having positive slope denoted by $\frac{l}{e}$ with $l, e$ coprime, i.e.,  $$\min\bigg\{\frac{v_{p,x}(a_{n-i}(x))}{i}~\mid~1\leq i\leq n\bigg\} = \frac{v_{p,x}(a_0(x))}{n} = \frac{l}{e}$$ so that $n$ is divisible by $e$, say $n=et$ and $v_{p,x}(a_{n-ej}(x)) \geq lj$ for $1\leq j\leq t$. Thus the polynomial $b_j(x):=\frac{a_{n-ej}(x)}{p^{lj}}$   has coefficients in $\Z_p$ and hence $b_j(\alpha)\in \Z_p[\alpha]$ for $1\leq j \leq t$.  The polynomial $T(Y)$ in an indeterminate $Y$ defined by   $T(Y) = Y^t + \sum\limits_{j=1}^{t} \overline{b_j}(\overline{\alpha})Y^{t-j}$ having coefficients in $\F_p[\overline{\alpha}]\cong \frac{\F_p[x]}{\langle\phi(x)\rangle}$ is called residual polynomial of $f
   (x)$ with respect to $(\phi,S)$.
   \end{definition}
   The following definition gives the notion of residual polynomial when $f(x)$ is more general.
   \begin{definition}\label{p1.11} Let $\phi(x), \alpha$ be as in Definition \ref{p1.10}.  Let $g(x)\in \Z_p[x]$ be a monic polynomial not divisible by $\phi(x)$ such that $\overline{g}(x)$ is a power of $\overline{\phi}(x)$. Let $\lambda_1 < \cdots < \lambda_k$ be the slopes of the edges of the $\phi$-Newton polygon of $g(x)$ and $S_i$ denote the edge with slope $\lambda_i$. In view of a classical result proved by Ore (cf. \cite[Theorem 1.5]{CMS}, \cite[Theorem 1.1]{SKS1}), we can write $g(x) = g_1(x)\cdots g_k(x)$, where the $\phi$-Newton polygon of $g_i(x) \in \Z_{{p}}[x]$ has a single edge, say $S_i'$, which is a translate of $S_i$. Let $T_i(Y)$ belonging to ${\F}_{p}[\overline{\alpha}][Y]$ denote the residual polynomial of  $g_i(x)$ with respect to ($\phi,~S_i'$) described as in Definition \ref{p1.10}.  For convenience, the polynomial $T_i(Y)$  will be referred to as the residual  polynomial  of   $g(x)$ with respect to $(\phi,S_i)$. The polynomial $g(x)$ is said to be $p$-regular with respect to $\phi$ if none of the polynomials $T_i(Y)$  has a repeated root in the algebraic closure of $\F_p$, $1\leq i\leq k$. In general, if $F(x)$ belonging to $\Z_p[x]$ is a monic polynomial and $\overline{f}(x) = \overline{\phi}_{1}(x)^{e_1}\cdots\overline{\phi}_r{(x)}^{e_r}$ is its factorization modulo $p$ into irreducible polynomials with each $\phi_i(x)$ belonging to $\Z_p[x]$ monic and $e_i > 0$, then by Hensel's Lemma there exist monic polynomials $f_1(x), \cdots, f_r(x)$ belonging to $\Z_{{p}}[x]$ such that $f(x) = f_1(x)\cdots f_r(x)$ and $\overline{f}_i(x) = \overline{\phi}_i(x)^{e_i}$ for each $i$. The polynomial $f(x)$ is said to be $p$-regular (with respect to $\phi_1, \cdots, \phi_r$) if each $f_i(x)$ is ${p}$-regular with respect to $\phi_i$.
   \end{definition}
   To determine the number of distinct prime ideals of $\mathbb{O}_K$ lying above a rational prime $p$, we will use the Newton polygon of second order and the following theorem which is a weaker version of Theorem 1.2 of \cite{SKS1}.
   \begin{theorem}\label{Th 1}
   Let $L=\Q(\xi)$ be an algebraic number field with $\xi$ satisfying an  irreducible polynomial $g(x)\in \Z[x]$ and $p$ be a rational prime. Let $ \overline{\phi}_{1}(x)^{e_1}\cdots\overline{\phi}_r{(x)}^{e_r}$ be the  factorization of $g(x)$ modulo  $p$ into powers of distinct irreducible polynomials  over $\F_p$ with each $\phi_i(x)\neq g(x)$ belonging to $\Z[x]$  monic. Suppose that  the $\phi_i$-Newton polygon of $g(x)$ has $k_i$ edges, say $S_{ij}$ having slopes $\lambda_{ij}=\frac{l_{ij}}{e_{ij}} $  with $\gcd~(l_{ij},~e_{ij})=1$ for $1\leq j\leq k_i$. If  $T_{ij}(Y) = \prod\limits_{s=1}^{s_{ij}}U_{ijs}(Y)$ is the factorization of  the residual polynomial $T_{ij}(Y)$ into distinct irreducible factors over $\F_p$  with respect to $(\phi_i,~S_{ij})$  for $1\leq j\leq k_i$, then $$p\mathbb{O}_L=\displaystyle\prod_{i=1}^{r}\displaystyle\prod_{j=1}^{k_i}\displaystyle\prod_{s=1}^{s_{ij}}\mathfrak p_{ijs}^{e_{ij}},$$ where $\mathfrak p_{ijs}$ are distinct prime ideals of $\mathbb{O}_L$ having residual degree $\deg \phi_i(x)\times\deg U_{ijs}(Y).$
   \end{theorem}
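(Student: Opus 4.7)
The plan is to prove the factorization formula by a three-stage decomposition of $g(x)$ in $\Z_p[x]$, each stage corresponding to a finer grouping of the primes of $\mathbb{O}_L$ lying above $p$. First, I would invoke Hensel's Lemma on $\overline{g}(x) = \overline{\phi}_1(x)^{e_1}\cdots\overline{\phi}_r(x)^{e_r}$ to lift this factorization to $g(x) = g_1(x)\cdots g_r(x)$ in $\Z_p[x]$ with $\overline{g}_i(x) = \overline{\phi}_i(x)^{e_i}$. This yields the isomorphism $L \otimes_{\Q} \Q_p \cong \prod_{i=1}^r \Q_p[x]/(g_i(x))$, so the primes of $\mathbb{O}_L$ above $p$ split across the factors $g_i$, and the problem reduces to analyzing each $g_i$ separately and then assembling the contributions.

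Next, for fixed $i$, I would apply Ore's ``theorem of the polygon'' (the splitting principle already invoked in Definition \ref{p1.11} above): since the $\phi_i$-Newton polygon of $g_i$ has edges $S_{i1},\ldots,S_{ik_i}$ of strictly increasing slopes $\lambda_{ij} = l_{ij}/e_{ij}$, there is a further factorization $g_i(x) = g_{i1}(x)\cdots g_{ik_i}(x)$ in $\Z_p[x]$ in which the $\phi_i$-Newton polygon of $g_{ij}$ consists of a single edge (a translate of $S_{ij}$). This step separates the roots of $g_i$ according to their $\phi_i$-adic valuation and forces the ramification index of every prime of $L$ lying over a root of $g_{ij}$ to be a multiple of the denominator $e_{ij}$ of the slope.

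The heart of the argument is the third stage: refining each $g_{ij}$ according to the factorization $T_{ij}(Y) = \prod_{s=1}^{s_{ij}} U_{ijs}(Y)$ of its residual polynomial. Using the hypothesis that the $U_{ijs}$ are pairwise distinct irreducibles over $\F_p[\overline{\alpha}]$, I would establish a lifted factorization $g_{ij}(x) = \prod_s h_{ijs}(x)$ in $\Z_p[x]$ in which each $h_{ijs}$ is irreducible of degree $\deg(\phi_i)\cdot e_{ij}\cdot\deg(U_{ijs})$, and the local field $\Q_p[x]/(h_{ijs}(x))$ has ramification index $e_{ij}$ and residue degree $\deg(\phi_i)\cdot\deg(U_{ijs})$. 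Each such $h_{ijs}$ therefore contributes a single prime $\mathfrak{p}_{ijs}$ of $\mathbb{O}_L$ with the stated invariants, and the consistency check $\sum_s e_{ij}\deg(\phi_i)\deg(U_{ijs}) = e_{ij}\deg(\phi_i)\deg(T_{ij})$, summed over $j$, recovers $e_i\deg(\phi_i)$, confirming the full factorization $p\mathbb{O}_L = \prod_{i,j,s}\mathfrak{p}_{ijs}^{e_{ij}}$.

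The main obstacle is this third stage. Lifting the factorization of $T_{ij}$ to one of $g_{ij}$ is a Hensel-type theorem in a non-standard setting, because the factors of $g_{ij}$ are not separated modulo $p$; they become visible only after a substitution that converts the edge of slope $l_{ij}/e_{ij}$ into an Eisenstein normal form over the unramified extension $\Q_p(\alpha)$, where $\alpha$ is a root of $\phi_i$. The separability hypothesis that the $U_{ijs}$ are pairwise distinct is essential, since it is exactly what drives a Newton-style approximation argument to converge to honest factors; if some $U_{ijs}$ were repeated, one would have to pass to second-order Newton polygons as in Guardia--Montes--Nart \cite{ZD}. Once the lifting is in place, irreducibility of each $h_{ijs}$ and the identification of its residue field as $\F_p[\overline{\alpha}][\beta]$ with $\beta$ a root of $U_{ijs}(Y)$ follow from standard valuation-theoretic arguments combining the single-slope structure of the Newton polygon with the shape of the residual polynomial.
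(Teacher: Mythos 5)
You should know that the paper itself gives no proof of this statement: it is quoted verbatim as ``a weaker version of Theorem 1.2 of \cite{SKS1}'', i.e.\ it is the classical theorem of Ore (in the form extended by Khanduja--Kumar, see also \cite{ZB}, \cite{CMS}, \cite{gaa}), so the only comparison available is with that literature. Your three-stage plan is exactly the standard route those sources follow: a Hensel factorization $g=g_1\cdots g_r$ over $\Z_p$ with $\overline{g}_i=\overline{\phi}_i^{\,e_i}$, then Ore's ``theorem of the polygon'' splitting each $g_i$ along the edges $S_{ij}$, then the ``theorem of the residual polynomial'' splitting each one-sided factor according to the distinct irreducible factors $U_{ijs}$ of $T_{ij}$, and finally the translation of the resulting irreducible factors of $g$ over $\Q_p$ into primes of $\mathbb{O}_L$ with the stated ramification indices and residual degrees via $L\otimes_{\Q}\Q_p\cong\prod \Q_p[x]/(h(x))$. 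So the architecture is the right one and matches the intended proof.

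The gap is that your third stage, which you correctly identify as ``the heart of the argument,'' is the entire mathematical content of the theorem, and you describe it rather than prove it: the claim that $p$-regularity (the $U_{ijs}$ pairwise distinct and each appearing to the first power) forces a factorization $g_{ij}=\prod_s h_{ijs}$ with $\Q_p[x]/(h_{ijs})$ having $e=e_{ij}$ and $f=\deg\phi_i\cdot\deg U_{ijs}$ is precisely Ore's theorem of the residual polynomial, and ``a Newton-style approximation argument converges to honest factors'' is not an argument. A complete write-up must either carry this out (e.g.\ by the reduction to an Eisenstein situation over the unramified extension $\Q_p(\alpha)$ as in \cite{CMS}, or by the valuation-prolongation method of \cite{SKS1}, where one also has to prove irreducibility of each $h_{ijs}$ and compute its residue field $\F_p[\overline{\alpha}][\overline{Y}]/(U_{ijs})$) or simply cite one of these results, which is what the paper does. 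Two smaller points: the residual polynomials $T_{ij}$ have coefficients in $\F_p[\overline{\alpha}]\cong\F_p[x]/\langle\phi_i(x)\rangle$, so ``distinct irreducible factors'' must be taken over that field (your phrasing is the correct one; the theorem's wording ``over $\F_p$'' is loose), and the final degree count $\sum_s e_{ij}\deg\phi_i\deg U_{ijs}$ summed over $j$ is a consistency check, not a needed step, since the factorization of $p\mathbb{O}_L$ already follows once all local factors $h_{ijs}$ of $g$ over $\Q_p$ have been produced.
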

   
  Let $L=\Q(\gamma)$ where $\gamma$ is a root of a monic polynomial $g(x)=a_nx^n+\cdots+a_0\in\Z[x],~a_0\neq 0$. Let $p$ be a prime number such that  $g(x)\equiv x^n(p)$. Suppose that the $p$-Newton polygon of $g(x)$ consists of a single edge with positive slope $\lambda=\frac{l}{e},$ where $\gcd(l,~e)=1$. Let the residual polynomial $T_g(Y)\in\F_p[Y]$ of $g(x)$ is a power of monic irreducible polynomial $\psi(Y)$ over $\F_p$, i.e., $T_g(Y)=\psi(Y)^s$ in $\F_p[Y]$, where $s\geq 2$. In this case, we construct a key polynomial $\Phi(x)$ attached with the slope $\lambda$ such that the following hold: \\(i) $\Phi(x)$ is congruent to a power of $x$ modulo $p$.\\
  (ii) The $p$-Newton polygon of $\Phi(x)$ of first order is one-sided with slope $\lambda$.\\
  (iii) The residual polynomial of $\Phi(x)$  with respect to $p$ is $\psi(Y)$ in $\F_p[Y]$.\\
  (iv) $\deg \Phi(x)= e\deg \psi(Y)$. \\
  As described in \cite[Section 2.2]{gaa}, the data $(x;~\lambda,~\psi(Y))$ determines a $p $-adic valuation $V$ of the field $\Q_p(x)$ which satisfies the following properties:\\
  (i) $V(x)=l$ where $\lambda=\frac{l}{e}~$ with $\gcd(l,~e)=1$.\\
  (ii) If $p(x)=\displaystyle\sum_{0\leq i}^{}b_ix^i\in\Z_p[x]$ is any polynomial, then \begin{equation}\label{eq:an}
  V(p(x))=e \displaystyle\min_{0\leq i}^{}\{v_p(b_i)+i \lambda\}
  \end{equation}
 We define the above valuation $V$ to  the valuation of second order.\\
\indent  If $g(x)=\displaystyle\sum_{i=0}^{u} a_i(x)\Phi(x)^i\in\Z_p[x]$ is a $\Phi$-adic expansion of $g(x)$, then the Newton polygon of $g(x)$ with respect to $V$ (also called $V$-Newton polygon of $g(x) $ of second order) is the lower convex hull of the set of the points $(i, V(a_{u-i}(x)\Phi(x)^{i}))$ of the  Euclidean plane.\\
 Let the $V$-Newton polygon of $g(x)$ of second order has $k$-sides $E_1,\cdots , E_k$ with positive slopes $\lambda_1,\cdots,\lambda_k$.  Let $\lambda_t=\frac{l_t}{e_t}$ with 
 $\gcd(l_t,~e_t)=1$ and $[a_t, b_t]$ denote the projection
 to the horizontal axis of the side of slope $\lambda_t$ for $1\leq t \leq k.$ Then, there is a natural residual polynomial $\psi_t(Y)$ of second order attached to each side $E_t,$ whose degree coincides
 with the degree of the side (i.e. $\frac{b_t-a_t}{e_t}$) \cite[Section 2.5]{gaa}. Only those integral points of the
$ V $-Newton polygon of $g(x)$ which lie on the side, determine a non-zero coefficient of this
 second order residual polynomial.
 We define $g(x)$ to be $\psi_t $-regular when the second order residual polynomial $\psi_t(Y)$ attached to the side $E_t$ of the $V$-Newton polygon of $g(x)$ of second order is separable in
 $\frac{\F_p[Y ]}{\langle\psi(Y)\rangle}$. We define $g(x)$ to be $V$-regular if $g(x)$ is $\psi_t$-regular for each $t$, $1\leq t\leq k$. Further, if each residual polynomial $\psi_t(Y)$, $t\in\{1,~2,\cdots,k\}$, is irreducible in
 $\frac{\F_p[Y ]}{\langle\psi(Y)\rangle}$ , then each $\psi_t(Y)$ provides a prime ideal having residual degree  $\deg \psi\cdot \deg \psi_t$ and ramification index  $e\cdot e_t $. 
   \section{Proof of Theorems \ref{Th1.1}, \ref{Th1.2} and \ref{Th1.3}.}
   \begin{proof}[Proof of Theorem \ref{Th1.1}] 
   	 If $2$ is a common index divisor of $K$, then $2\mid D$, i.e. $2\mid b$.\\
   	\textbf{Case A1:} $a\equiv1\mod2$,  $b\equiv0\mod2$. In this case, $f(x)\equiv x(1+x+x^2)^2(x+1)^2\mod 2$. Let $\phi_1(x)=x$, $\phi_2(x)=1+x+x^2$ and  $\phi_3(x)=x+1$. The $\phi_1$-Newton polygon of $f(x)$ has a single edge joining the points $(0,~0)$ and $(7,~v_2(b))$. The residual polynomial associated with this edge is linear. Therefore $\phi$ provides one prime ideal say $\mathfrak{ p}_1$ of residual degree $1$. So 
   	\begin{equation}\label{eq3.1}
   		2\mathbb{O}_K=\mathfrak{ p}_1\mathfrak{P},\hspace{1.5cm}~\text{where}~ \mathfrak{P} ~\text{is an ideal of}~ \mathbb{O}_K
   	\end{equation}
   Using Theorem \ref{Th 1}, we see that $\phi_2$ provides prime ideals of residual degree multiple of $2.$ Therefore
    keeping in mind Lemma \ref{Th 1.12}, $2\mid i(K)$ if and only if $\phi_2$ and $\phi_3$ provides atleast two prime ideals of residual degree $t$, where $t\in \{1,2\}$. The $\phi_i$, for $i=2,3$  expansion of $f(x)$ is 
\begin{equation}
	(x-3)\phi_2^3+(3x+5)\phi_2^2-(4x+2)\phi_2+b+x(a+1)
\end{equation}
\begin{equation}
	\phi_3^7-7\phi_3^6+21\phi_3^5-35\phi_3^4+35\phi_3^3-21\phi_3^2+(a+7)\phi_3+(b-a-1)
\end{equation}
The $\phi_2$-Newton polygon of $f(x)$ is the lower convex hull of the points $(0,~0)$, $(1,~0)$, $(2,~1)$ and $(3,\min\{v_2(b), v_2(a+1)\})$ and $\phi_3$-Newton polygon of $f(x)$ is the lower convex hull of the points $(0,~0)$, $(1,~0)$, $(2,~0)$, $(3,~0)$ , $(4,~0)$, $(5,~0)$, $(6,v_2(a+7))$ and $(7,~v_2(b-a-1))$.
\indent Let $a\equiv 3\mod4$ and $b\equiv 2\mod4$, then $v_2(a+1)\ge 2$. For each $i=2,3$, $\phi_i$-Newton polygon of $f(x)$ has a single edge of slope $\frac{1}{2}$. The residual polynomial attached to this  edge is linear. Thus  $\phi_2$ provides one prime ideal say $\mathfrak{ p}_2$ of residual degree $2$ and $\phi_3$ provides one prime ideal say $\mathfrak{ p}_3$ of residual degree 1. So by Theorem {\ref {Th 1}}, $\mathfrak{P}=\mathfrak{ p}_2^2\mathfrak{ p}_3^2$. Thus $2\nmid i(K).$\\
\indent Let $a\equiv 3\mod 8$ and  $b\equiv 0\mod 8$ , then for each $i=2,3$, the $\phi_i$-Newton polygon of $f(x)$ has a single edge of positive slope. The residual polynomial of $f(x)$ associated to this edge with respect to $\phi_2$ is $(Y-(x+1))(Y-1)$ over  $\frac{\F_2[x]}{\langle\phi_2(x)\rangle}$ and with respect to $\phi_3$ is $Y^2+Y+\bar{1}\in F_2[Y].$ Therefore $\mathfrak{P}=\mathfrak{ p}_2\mathfrak{ p}_3\mathfrak{ p}_4$, where residual degree of each $\mathfrak{ p}_i$, for $i=2,3,4$  is $2$. Thus $2\mid i(K)$.\\
\indent Let $a\equiv 7\mod 8$ and  $b\equiv 0\mod 8$. Then for each $i=2,3$, the  $\phi_i$-Newton polygon of $f(x)$ has a two edges of positive slope. The residual polynomial of $f(x)$ associated to each edge is linear.  Therefore $\mathfrak{P}=\mathfrak{ p}_2\mathfrak{ p}_3\mathfrak{ p}_4\mathfrak{ p}_5$, where residual degree of each $\mathfrak{ p}_2$, $\mathfrak{ p}_3$ is 2 and of $\mathfrak{ p}_4$, $\mathfrak{ p}_5$ is $1$. Thus $2\mid i(K)$.\\
\indent Let $a\equiv 3\mod 8$ and  $b\equiv 4\mod 8$.
 Then the $\phi_2$-Newton polygon of $f(x)$ has a single edge of positive slope. The residual polynomial of $f(x)$ associated to this edge with respect to $\phi_2$ is $Y^2+xY+1$ over  $\frac{\F_2[x]}{\langle\phi_2(x)\rangle}$. The  $\phi_3$-Newton polygon of $f(x)$ has  two edges of positive slope. The residual polynomial of $f(x)$ associated to each edge is linear. So $\mathfrak{P}=\mathfrak{ p}_2\mathfrak{ p}_3\mathfrak{ p}_4$, where residual degree of $\mathfrak{ p}_2$ is $4$ and of $\mathfrak{ p}_3$, $\mathfrak{ p}_4$ is $1$. Hence $2\mid i(K)$.\\
 \indent Let $a\equiv 7\mod 8$ and  $b\equiv 4\mod 8$.
 Then the $\phi_2$-Newton polygon of $f(x)$ has a single edge of positive slope. The residual polynomial of $f(x)$ associated to this edge with respect to $\phi_2$ is $Y^2+xY+x$ over  $\frac{\F_2[x]}{\langle\phi_2(x)\rangle}$. The  $\phi_3$-Newton polygon of $f(x)$ has  a single  edge of positive slope. The residual polynomial of $f(x)$ associated to the edge is $Y^2+Y+\bar{1}$. So $\mathfrak{P}=\mathfrak{ p}_2\mathfrak{ p}_3$, where residual degree of $\mathfrak{ p}_2$ and  $\mathfrak{ p}_3$ is $4$ and $2$ respectively. Hence $2\nmid i(K)$.\\
\indent Let $a\equiv 1\mod4$ and $b\equiv 0\mod 2$. Then $\phi_2$-Newton polygon of $f(x)$ has a single edge of slope $\frac{1}{2}$. Thus $\phi_2$ provide one prime ideal say $\mathfrak{ p}_2$ of residual degree $1$. Therefore by using \eqref{eq3.1}, we have
\begin{equation}
	2\mathbb{O}_K=\mathfrak{p}_1\mathfrak{ p}_2^2\mathfrak{I}, \hspace{1cm}\text{where}~ \mathfrak{I}~\text{ is an ideal of}~\mathbb{O}_K.
\end{equation}  It is clear that $2\mid i(K)$ if and only if $\phi_3$ provides either  two prime ideals of residual degree $2$ each or atleast one prime ideal of residual degree $1$.\\ If $a\equiv 1\mod 4$ and  $b\equiv 0\mod4$, then  $\phi_3$  provides one prime ideal say $\mathfrak{ p}_3$  of residual degree $1$. Thus $\mathfrak{I}=\mathfrak{ p}_3^2$. So, $2\mid i(K).$\\
 \indent One can observe that if  $a\equiv 1\mod 4$, $b\equiv 2\mod 4$, $v_2(b-a-1)<2v_2(a+7)$ and $v_2(b-a-1)$ is even, then the $\phi_3$-Newton polygon has a single edge of positive slope whose residual polynomial  is not square-free, therefore we find some rational $\mu$ such that $f(x)$ is $x-\mu$ regular. \\
 \indent Let  $a\equiv 1\mod4$ and $b\equiv 2\mod4$ and  take $\mu=\frac{-7b}{6a}$, then $v_2(\mu)=0$. Let  $\phi_3(x)=x-\mu$, then the $\phi_3$ expansion of $f(x)$ is 
 \begin{equation}
 	\phi_3^7+7\mu\phi_3^6+21\mu^2\phi_3^5+35\mu^3\phi_3^4+35\mu^4\phi_3^3+21\mu^5\phi_3^2+f'(\mu)\phi_3+f(\mu)
 \end{equation}
A simple calculation shows that $f(\mu)=\frac{-Db}{6^7a^7}$ and $f'(\mu)=\frac{D}{6^6a^6}$. Clearly $v_2(f(\mu))=v_2(f'(\mu))=v_2(D)-6$. It is easy to verify that $v_2(D)-6\ge1$. If $v_2(D)$ is odd, then $\phi_3$-Newton polygon of $f(x)$ has one edge of positive slope joining the points $(5,~0)$ and $(7, ~v_2(D)-6)$. So $\phi_3$ provides one prime ideal say $\mathfrak{ p}_3$ of residual degree $1$. Therefore $\mathfrak{I}=\mathfrak{ p}_3^2$ and so $2\mid i(K)$.\\ If $v_2(D)$ is even, then $f(x)$ is not $x-\mu$ regular. So we choose another rational number.\\
\indent Here we  have $a\equiv 1\mod4$, $b\equiv 2\mod4$ and $v_2(D)$ is even. Take $u=\frac{v_2(D)-6}{2}$  and define $\rho=2^u-\mu$, then $v_2(\rho)=0$. Let $\phi_3(x)=x-\rho$. The $\phi_3$ expansion of $f(x)$ is 
\begin{equation}
 	\phi_3^7+7\rho\phi_3^6+21\rho^2\phi_3^5+35\rho^3\phi_3^4+35\rho^4\phi_3^3+21\rho^5\phi_3^2+f'(\rho)\phi_3+f(\rho)
\end{equation}
One can verify that $v_2(f(\rho))\ge 2u+1$ and $v_2(f'(\rho))= u+1$. If $v_2(f(\rho))= 2u+1$, then $\phi_3$ provides one prime ideal say $\mathfrak{ p}_3$ of residual degree $1$. Therefore $\mathfrak{I}=\mathfrak{ p}_3^2$. So $2\mid i(K)$. If $v_2(f(\rho))= 2u+2$, then $\phi_3$ provides one prime ideal say $\mathfrak{ p}_3$ of residual degree $2$. Therefore $\mathfrak{I}=\mathfrak{ p}_3$. So $2\nmid i(K)$. If $v_2(f(\rho))\ge 2u+3$, then $\phi_3$ provides two prime ideal say $\mathfrak{ p}_3$  and $\mathfrak{ p}_4$ of residual degree $1$ each. Therefore $\mathfrak{I}=\mathfrak{ p}_3\mathfrak{ p}_4$. So $2\mid i(K)$\\\\
   \noindent\textbf{Case A2:} $a\equiv0\mod2$, $b\equiv0\mod2$. Here $f(x)\equiv x^7\mod 2$. The $x$-Newton polygon of $f(x)$ is the lower convex hull of the points $(0,~0)$, $(6,~v_2(a))$ and $(7,~v_2(b))$.\\ 
   \indent  Let $7v_2(a)>6v_2(b)$, then by  \eqref{1}, $v_2(b)<7$. The $x$-Newton polygon of $f(x)$ has a single edge of  positive slope $\frac{v_2(b)}{7}$. The residual polynomial associated to this edge is linear.  Therefore $2\mathbb{O}_K=\mathfrak{p}^7$. Thus $2\nmid i(K)$.\\
\indent For the remaining case, let  $7v_2(a)<6v_2(b)$. Then by  \eqref{1}, we have $1\le v_2(a)\le 5$. The $x$-Newton polygon of $f(x)$ has  two  edges of positive slope. The first edge say $S_1$ is line segment joining the points $(0,~0)$ and $(6,~v_2(a))$ with slope $\frac{v_2(a)}{6}$. The second edge say $S_2$ is the line segment joining the points $(6,~v_2(a))$ and $(7,~v_2(b))$. The residual polynomial associated to the edge $S_2$ is linear. Therefore $S_2$ provides one prime ideal say $\mathfrak{ q}$  of residual degree $1$. Thus
\begin{equation}\label{eq3.7}
	2\mathbb{O}_K=\mathfrak{R}\mathfrak{ q}, \hspace{1cm}~\text{where} ~\mathfrak{R}~\text{is an ideal of }~ \mathbb{O}_K 
\end{equation} 
  If $v_2(a)\in \{1,5\}$, then $S_1$  provides one prime ideal say $\mathfrak{ p}_1$ of residual degree $1$. So $\mathfrak{R}=\mathfrak{ p}_1^6$. Hence $2\nmid i(K)$.\\ If $v_2(a)=3$, then the residual polynomial associated to $S_1$  is $(Y+\bar{1})(Y^2+Y+\bar{1})$. Therefore $\mathfrak{R}=\mathfrak{ p}_1^2\mathfrak{ p}_2^3$, where residual degree of $\mathfrak{ p}_1$ and $\mathfrak{ p}_2$ is $1$ and $2$ respectively. Thus $2\nmid i(K)$.\\
If $v_2(a)=2$, then $\lambda=\frac{1}{3}$ and the residual polynomial associated to $S_1$ is $(Y+\bar{1})^2$, i.e. residual polynomial is not square-free.  Let $\psi(Y)=Y+\bar{1}$, $h=1$ and $e=3$. Since $e>1$, for the prime ideal factorization of $2\mathbb{O}_K$, we shall use higher order Newton polygons. Take $\Phi(x)=x^3+2$, then $\Phi(x)$ is the key polynomial attached to the data $(x, \lambda, \psi(Y))$. As in \eqref{eq:an}, we can define valuation $V$ of second order such that $V(\Phi)=3$, $V(x)=1$ and $V(2)=3$. The $\Phi(x)$ expansion of $f(x)$ is 
\begin{equation}
	f(x)=x\Phi^2(x)-4x\Phi(x)+(a+4)x+b
\end{equation}

\noindent The $\Phi$-Newton polygon of $f(x)$ of second order is the lower convex hull of the points $(0,~7)$, $(1,~10)$ and $(2,~v)$, where $v=\min\{3v_2(b), 3v_2(a+4)+1\}$. Since $v_2(a)=2$, we have $v_2(b)\ge3$.\\ 
If  $a\equiv 4\mod16$ and $b\equiv 0\mod 16$, then $v=10$. The $\Phi$-Newton polygon of $f(x)$ of second order has a single edge and the residual polynomial attached to this edge is linear. So, $\mathfrak{R}=\mathfrak{ p}_1^6$, where residual degree of $\mathfrak{ p}_1$ is $1$. Thus $2\nmid i(K)$.\\
If  $a\equiv 12\mod16$ and $b\equiv 16\mod 32$, then $v=12$. The $\Phi$-Newton polygon of $f(x)$ of second order has a single edge whose  residual polynomial  is linear. So, $\mathfrak{R}=\mathfrak{ p}_1^6$, where residual degree of $\mathfrak{ p}_1$ is $1$. Thus $2\nmid i(K)$\\
 If  $a\equiv 12\mod32$ and $b\equiv 0\mod 32$, Then $v=13$, then $\Phi$-Newton polygon of $f(x)$ of second order has a single edge and residual polynomial attached to this edge is $Y^2+Y+\bar{1}$. So, $\mathfrak{R}=\mathfrak{ p}_2^3$,where residual degree of $\mathfrak{ p}_2$ is $2$. Thus $2\nmid i(K)$.\\
  If  $a\equiv 28\mod32$ and $b\equiv 0\mod 32$, then $v>13$, then $\Phi$-Newton polygon of $f(x)$ of second order has two  edges of positive slope. The residual polynomial attached to each edge is linear. So, $\Phi$ provides two prime ideals say $\mathfrak{ p}_1$ and $\mathfrak{ p}_2$ of residual degree $1$ each. Therefore $\mathfrak{R}=\mathfrak{ p}_1^3\mathfrak{ p}_2^3$. hence $2\mid i(K)$.\\
\indent Note that if $v_2(b)=3$, then the $\Phi$-Newton polygon of $f(x)$ of second order has a single edge. The residual polynomial attached to this edge is not square-free. So we shall use another key polynomial.\\
\indent Let $v_2(b)=3$ and take $\Phi(x)=x^3+2x+2$. Then $\Phi(x)$ is key polynomial attached to $(x,\frac{1}{3},\psi(Y))$. Let $V$ be same as given above. The $\Phi(x)$ expansion of $f(x)$ is 
\begin{equation}
	f(x)=x\Phi^2(x)+(4-4x-4x^2)\Phi(x)+(b-8+(a-4)x+8x^2)
\end{equation}
 The $\Phi$-Newton polygon of $f(x)$ of second order being the lower convex hull of the points $(0,~7)$, $(1,~9)$ and $(2,~w)$, where $w=V(b-8+(a-4)x+8x^2)$.\\
  If $a\equiv 12\mod16$ and $b\equiv 8\mod 16$, then $w=10$. The $\Phi$ provide one prime ideal say $\mathfrak{ p}_1$ of residual degree $1$. So, $\mathfrak{R}=\mathfrak{p}_1^6$. Thus $2\nmid i(K)$.\\ If $a\equiv 4\mod16$ and $b\equiv 8\mod 16$, then $w=11$, then $\Phi$ provides one prime ideal say $\mathfrak{ p}_1$ of residual degree $2$. So, $\mathfrak{R}=\mathfrak{ p}_1^3$. Thus $2\nmid i(K)$.\\
% If $w>11$, then $\Phi$ provides two prime ideals say $\mathfrak{ p}_1$  and $\mathfrak{ p}_2$ of residual degree $1$ each. So, $\mathfrak{R}=\mathfrak{ p}_1^3\mathfrak{ p}_2^3$. Thus $2\mid i(K)$.\\
\indent  Let $v_2(a)=4$, then $v_2(b)\ge5$, $\lambda=\frac{2}{3}$, $h=2$ and $e=3$. Take $\Phi(x)=x^3+4$. The valuation $V$ of second order defined in \eqref{eq:an} is such that $V(\Phi)=6$, $V(x)=2$ and $V(2)=3$. The $\Phi(x)$ expansion of $f(x)$ is 
 \begin{equation}
      x\Phi^2(x)-8x\Phi(x)+(a+16)x+b
 \end{equation}
Let $w'= \min \{3v_2(b), 3v_2(a+16)+2\})$, then $w'\ge 15$. The $\Phi$-Newton polygon of $f(x)$ of second order is the lower convex hull of the points $(0,~14)$, $(1,~17)$ and $(2,~w')$.\\
 If $a\equiv 16\mod32$ and $b\equiv 32\mod 64$, then $w'=15$. The $\Phi$ provide one prime ideal say $\mathfrak{p}_1$ of residual degree $1$. So, $\mathfrak{R}=\mathfrak{ p}_1^6$. Therefore $2\nmid i(K)$.\\
 If $a\equiv 16\mod64$ and $b\equiv 0\mod 128$, then $w'=17$. So, the $\Phi$-Newton polygon of $f(x)$ of second order has a single edge of positive slope. The residual polynomial attached to this edge is linear. Therefore $\mathfrak{R}=\mathfrak{ p}_1^6$. Thus $2\nmid i(K)$.\\
 If $a\equiv 48\mod64$ and $b\equiv 0\mod 128$, then $w'=20$. So, the $\Phi$-Newton polygon of $f(x)$ of second order has a single edge of positive slope. The residual polynomial attached to this edge is $Y^2+Y+\bar{1}$. Therefore $\mathfrak{R}=\mathfrak{ p}_1^3$. Thus $2\nmid i(K)$.\\
   If $a\equiv 16\mod64$ and $b\equiv 64\mod 128$, then $w'=17$. Hence $\mathfrak{R}=\mathfrak{ p}_1^6$. Thus $2\nmid i(K)$.\\
    If $a\equiv 48\mod64$ and $b\equiv 64\mod 128$, then $w'=18$. So,  Take $\Phi(x)=x^3+4x+4$. The $\Phi$ expansion of $f(x)$ is 
\begin{equation}
	x\Phi^2(x)-(8x+4x^2)\Phi(x)+b-64+x(a-48)+32x^2
\end{equation}
The $\Phi$-Newton polygon of $f(x)$ of second order is the lower convex hull of the points $(0,~14)$, $(1,~16)$ and $(2,~19)$. Thus $\Phi$ provide two prime ideals say $\mathfrak{ p}_1$ and $\mathfrak{ p}_2$  of residual degree $1$ each. So $\mathfrak{R}= \mathfrak{p}_1^3\mathfrak{p}_2^3$. Therefore $2\mid i(K)$.                       
  This completes the proof of the theorem. \end{proof}

\begin{proof}[Proof of Theorem \ref{Th1.2}]  If $3\mid i(K)$, then by using $\eqref{eq1}$ and  $\eqref{eq2}$, we have $3\mid b$.\\
\textbf{Case B1:} Let $a\equiv 0\mod3$ and $b\equiv 0\mod 3$, then $f(x)\equiv x^7\mod 3$. The $x$-Newton polygon of $f(x)$ is the lower convex hull of the points $(0,~0)$, $(6,~v_3(a))$ and $(7,~v_3(b))$. If $7v_3(a)>6v_3(b)$, then $x$ provides one prime ideal of residual degree $1$. So $3\mathbb{O}_K=\mathfrak{p}^6$.\\
Let $7v_3(a)<6v_3(b)$. Then the $x$-Newton polygon of $f(x)$ being the lower convex hull of the points $(0,~0)$, $(6,~v_3(a))$ and $(7,~v_3(b))$ has two edges of positive slopes. The first say $S_1$ is the line segment joining the points $(0,~0)$ and $(6,~v_3(a))$. The second edge say $S_2$ is the line segment joining the points $(6,~v_3(a))$ and $(7,~v_3(b))$. The residual polynomial associated with the edge $S_2$ is linear. Thus $3\mathbb{O}_K=\mathfrak{ L}\mathfrak{ q}$, for some ideal $\mathfrak{L}$ of  $\mathbb{O}_K$. Using \eqref{1} and $7v_3(a)>6v_3(b)$, we have $v_3(b)\in\{1,2,3,4,5\}.$\\ If $v_3(a)\in \{1,5\}$, then $S_1$ provides
one prime ideal say $\mathfrak{ p}_1$ of residual degree $1$. So  $\mathfrak{L}=\mathfrak{ p}_1^6$. \\ If $v_3(a)\in \{2,4\}$, then the residual polynomial associated with $S_1$ is $(Y-\bar{1})(Y+\bar{1})$. So  $\mathfrak{L}=\mathfrak{ p}_1^3\mathfrak{ p}_2^3$.\\
Let $v_2(a)=3$, then the residual polynimial associated with $S_1$  is $(Y+\delta)^2$, where $\delta\in\{1,-1\}$. Let $\psi(Y)=Y+\delta$, $\lambda=\frac{1}{2}$. Take $\phi(x)=x^2+3\delta$, $h=1$ and $e=2$, then $\Phi(x)$ is key  polynomial attached to the data $(x,\lambda,\psi(Y))$. The valuation $V$ on $\mathbb{Q}_2[x]$, given in \eqref{eq:an} is such that $V(\Phi)=2$, $V(x)=1$ and $V(3)=2$. The $\Phi(x)$ expansion of $f(x)$ is 
\begin{equation}
	x\Phi^3(x)-9\delta\Phi^2(x)+27x\Phi(x)+b+x(a-27\delta)
\end{equation}
The $\Phi(x)$-Newton polygon of second order is the lower convex hull of the points $(0,~7)$, $(1,~9)$, $(2,~9)$ and  $(3,~\min\{2v_3(b),2v_3(a-27\delta)+1\}) $. As $v_3(a)=3$, so by using $\eqref{1}$, we have  $v_3(b)\ge4.$\\ 
If $v_3(b)=4$, then $\Phi$-Newton polygon of $f(x)$ has a single edge of slope $\frac{1}{3}$. The residual polynomial associated to this edge is linear. Therefore $\mathfrak{ L}=\mathfrak{ p}_1^6$, where degree of $\mathfrak{ p}_1$ is $1$.\\
If $v_3(b)>4$ and $v_3(a-27\delta)=4$, then $\Phi$-Newton polygon of $f(x)$ has a single edge of slope $\frac{2}{3}$. The residual polynomial associated to this edge is linear. Therefore $\mathfrak{ L}=\mathfrak{ p}_1^6$, where degree of $\mathfrak{ p}_1$ is $1$.\\
If $v_3(b)=5$ and $v_3(a-27\delta)>4$, then $\Phi$-Newton polygon of $f(x)$ has a single edge joining the points $(0,~7)$ and $(3,~10)$ with a lattice point $(2,~9)$ on it. The residual polynomial associated to this edge is $Y^3\pm Y\pm\bar{1}$, which is separable. Therefore $\mathfrak{ L}=\mathfrak{ p}_1^2$, where degree of $\mathfrak{ p}_1$ is $3$.\\
If $v_3(b)>5$ and $v_3(a-27\delta)>4$, then $\Phi$-Newton polygon of $f(x)$ has two  edges of positive slopes. The residual polynomial associated to the first edge is  $Y^2+\bar{1}$. Therefore $\mathfrak{ L}=\mathfrak{ p}_1^2\mathfrak{ p}_2^2$. Hence we conclude that when $a\equiv 0\mod 3$ and $b\equiv 0\mod3$, $3\nmid i(K)$.\\\\
\noindent\textbf{Case B2:} $a\equiv -1\mod 3$ and $b\equiv 0\mod 3$. In this case $f(x)\equiv x(x-1)^3(x+1)^3\mod 3$. Clearly $x$-Newton polygon of $f(x)$ provides one prime ideal say $\mathfrak{ q}$ of residual degree $1$. Therefore 
\begin{equation}
	3\mathbb{O}_K=\mathfrak{ q}\mathfrak{J} ,\hspace{1.5cm}~\text{where}~\mathfrak{J}~\text{is an ideal of}~\mathbb{O}_K
\end{equation}
Keepind in mind Lemma  \ref{Th 1.12}, $3\mid i(K)$ if and only if $x-1$ and $x+1$ provides atleast three prime ideals of residual degree $1$ each. Let $\phi_\delta(x)=(x+\delta)$, where $\delta\in \{-1,1\}$, then the $\phi_\delta(x)$  expansion of $f(x)$ is 
\begin{equation}
	\phi^7_\delta(x)-7\delta\phi^6_\delta(x)+21\phi^5_\delta(x)-35\delta\phi^4_\delta(x)+35\delta\phi^3_\delta(x)-21\delta\phi^2_\delta(x)+(a+7)\phi_\delta(x)+(b-\delta(a+1))
\end{equation}
The $\phi_\delta(x)$-Newton polygon of $f(x)$ is the lower convex hull of the points $(0,~0)$, $(1,~0)$, $(2,~0)$, $(3,~0)$, $(4,~0)$, $(5,~1)$, $(6,~v_3(a+7))$ and  $(7,~v_3(b-\delta(a+1))$.
If $v_3(a+1)=1$ and $v_3(b)>1$, then for each $\delta\in\{-1,1\}$, $\phi_\delta$ provides one prime ideal of residual degree $1$. Therefore $\mathfrak{ J}=\mathfrak{ p}_1^3\mathfrak{ p}_2^3$. Hence $3\nmid i(K)$.\\
 Note that when $v_3(a+1)=1$ and $v_3(b)=1$, then either $a\equiv 2\mod 9$ or $a\equiv 5\mod 9.$\\
 \indent Let $a\equiv 2\mod 9$, $b_3\equiv 1\mod 3$ and $v_3(b-a-1)=2$, then for each $\delta\in\{1,-1\}$, $\phi_\delta$-Newton polygon of $f(x)$ has a single edge of positive slope and the residual polynomial attached to this edge is linear. Thus $\mathfrak{ J}=\mathfrak{ p}_1^3\mathfrak{ p}_2^3$, where residual degree of $\mathfrak{p_1}$ and $\mathfrak{p_2}$ is $1$. Hence $3\nmid i(K)$.\\
\indent Let $a\equiv 2\mod 9$, $b_3\equiv 1\mod 3$, $v_3(a+7)=2$ and $v_3(b-a-1)\ge 3$. Then $\phi_{-1}$ provides one prime ideal of degree $1.$ The $\phi_{1}$-Newton polygon of $f(x)$ has two edges of positive slope. The first edge is the line segment joining the points $(4,~0)$ and $(6,~2)$. The second edge is the line segment joining the points $(6,~2)$ and $(7,~v_3(b-a-1))$. The residual polynomial attached to each edge is linear. Therefore $\mathfrak{ J}=\mathfrak{ p}_1^3\mathfrak{ p}_2^2\mathfrak{p_3}$, where for each $i=1,2,3$ the residual degree of $\mathfrak{p_i}$  is $1$. Hence $3\mid i(K)$.\\
\indent Let $a\equiv 2\mod 9$, $b_3\equiv 1\mod 3$, $v_3(a+7)\ge3$, $v_3(b-a-1)\ge 3$ and $2v_3(a+7)>v_3(b-a-1)+1$. Then $\phi_{-1}$ provides one prime ideal of residual  degree $1$ and $\phi_{1}$-Newton polygon of $f(x)$ has two edges of positive slope. The first edge is the line segment joinig the points $(4,~0)$ and $(5,~1)$ and the second edge join $(5,~1)$ with $(7,~v_3(b-a-1))$. The residual polynomial attached to the first edge is linear and to second edge is $(Y-\bar{1})(Y+\bar{1})$ or $Y^2+\bar{1}$ (according as $(b-a-1)_3\equiv -1\mod 3$ or $(b-a-1)_3\equiv 1\mod 3$). Thus either $\mathfrak{ J}=\mathfrak{ p}_1^3\mathfrak{ p}_2^2\mathfrak{p_3}$, where  the residual degree of $\mathfrak{p_i}=1$, for each $i=1,2,3$ or  $\mathfrak{ J}=\mathfrak{ p}_1^3\mathfrak{ p}_2^2\mathfrak{p_3}$, where  the residual degree of $\mathfrak{p_1}$, $\mathfrak{p_2}$   is $1$ and of $\mathfrak{p_3}$ is $2$.\\
\indent Let $a\equiv 2\mod 9$, $b_3\equiv 1\mod 3$, $v_3(a+7)\ge3$, $v_3(b-a-1)\ge 3$ and $2v_3(a+7)=v_3(b-a-1)+1$. Then $\phi_{-1}$ provides one prime ideal of residual  degree $1$ and $\phi_{1}$ has two edges of positive slope. The residual polynomial attached to the first edge is linear and to the second edge is $Y^2-Y-1$ or $Y^2+Y-1$. Thus  $\mathfrak{ J}=\mathfrak{ p}_1^3\mathfrak{ p}_2^2\mathfrak{p_3}$, where  the residual degree of $\mathfrak{p_1}$, $\mathfrak{p_2}$   is $1$ and of $\mathfrak{p_3}$ is $2$. So, $3\nmid i(K).$\\
\indent Let $a\equiv 2\mod 9$, $b_3\equiv 1\mod 3$, $v_3(a+7)\ge3$, $v_3(b-a-1)\ge 3$ and $2v_3(a+7)<v_3(b-a-1)+1$. Then $\phi_{-1}$ provides one prime ideal of residual  degree $1$ and $\phi_{1}$-Newton polygon of $f(x)$ has three edges of positive slope. The first edge is the line segment joining the points $(4,~0)$ and $(5,~1)$, the second edge join $(5,~1)$ with $(6,~v_3(a+7))$ and the third edge joins $(6,~v_3(a+7))$ and $(7,~v_3(b-a-1))$. The residual polynomial attached to the each edge is linear. Hence $\mathfrak{ J}=\mathfrak{ p}_1^3\mathfrak{ p}_2\mathfrak{p_3}\mathfrak{p_4}$, where  the residual degree of $\mathfrak{p_i}=1$, for each $i=1,2,3,4$  is $1$. Thus $3\mid i(K)$.\\
\indent Let $a\equiv 2\mod 9$, $b_3\equiv -1\mod 3$ and $v_3(b+a+1)=2$, then for each $\delta\in\{1,-1\}$, $\phi_\delta$-Newton polygon of $f(x)$ has a single edge of positive slope and the residual polynomial attached to this edge is linear. Thus $\mathfrak{ J}=\mathfrak{ p}_1^3\mathfrak{ p}_2^3$, where residual degree of $\mathfrak{p_1}$ and $\mathfrak{p_2}$ is $1$. Hence $3\nmid i(K)$.\\
\indent Let $a\equiv 2\mod 9$, $b_3\equiv -1\mod 3$, $v_3(a+7)=2$ and $v_3(b+a+1)\ge 3$. Then $\phi_{1}$ provides one prime ideal of degree $1.$ The $\phi_{-1}$-Newton polygon of $f(x)$ has two edges of positive slope. The first edge is the line segment joining the points $(4,~0)$ and $(6,~2)$. The second edge is the line segment joining the points $(6,~2)$ and $(7,~v_3(b-a-1))$. The residual polynomial attached to each edge is linear. Therefore $\mathfrak{ J}=\mathfrak{ p}_1^3\mathfrak{ p}_2^2\mathfrak{p_3}$, where for each $i=1,2,3$ the residual degree of $\mathfrak{p_i}$  is $1$. Hence $3\mid i(K)$.\\
\indent Let $a\equiv 2\mod 9$, $b_3\equiv -1\mod 3$, $v_3(a+7)\ge3$, $v_3(b+a+1)\ge 3$ and $2v_3(a+7)>v_3(a+b+1)+1$. Then $\phi_{1}$ provides one prime ideal of residual  degree $1$ and $\phi_{-1}$-Newton polygon of $f(x)$ has two edges of positive slope. The first edge is the line segment joinig the points $(4,~0)$ and $(5,~1)$ and the second edge join $(5,~1)$ with $(7,~v_3(b-a-1))$. The residual polynomial attached to the first edge is linear and to second edge is $(Y-\bar{1})(Y+\bar{1})$ or $Y^2+\bar{1}$ (according as $(b+a+1)_3\equiv -1\mod 3$ or $(b+a+1)_3\equiv 1\mod 3$). Thus either $\mathfrak{ J}=\mathfrak{ p}_1^3\mathfrak{ p}_2^2\mathfrak{p_3}$, where  the residual degree of $\mathfrak{p_i}=1$, for each $i=1,2,3$ or  $\mathfrak{ J}=\mathfrak{ p}_1^3\mathfrak{ p}_2^2\mathfrak{p_3}$, where  the residual degree of $\mathfrak{p_1}$, $\mathfrak{p_2}$   is $1$ and of $\mathfrak{p_3}$ is $2$.\\
\indent Let $a\equiv 2\mod 9$, $b_3\equiv -1\mod 3$, $v_3(a+7)\ge3$, $v_3(b+a+1)\ge 3$ and $2v_3(a+7)=v_3(a+b+1)+1$. Then $\phi_{1}$ provides one prime ideal of residual  degree $1$ and $\phi_{-1}$ has two edges of positive slope. The residual polynomial attached to the first edge is linear and to the second edge is $Y^2-Y-1$ or $Y^2+Y-1$. Thus  $\mathfrak{ J}=\mathfrak{ p}_1^3\mathfrak{ p}_2^2\mathfrak{p_3}$, where  the residual degree of $\mathfrak{p_1}$, $\mathfrak{p_2}$   is $1$ and of $\mathfrak{p_3}$ is $2$. So, $3\nmid i(K).$\\
\indent Let $a\equiv 2\mod 9$, $b_3\equiv -1\mod 3$, $v_3(a+7)\ge3$, $v_3(b+a+1)\ge 3$ and $2v_3(a+7)<v_3(a+b+1)+1$. Then $\phi_{-1}$ provides one prime ideal of residual  degree $1$ and $\phi_{1}$-Newton polygon of $f(x)$ has three edges of positive slope. The first edge is the line segment joining the points $(4,~0)$ and $(5,~1)$, the second edge join $(5,~1)$ with $(6,~v_3(a+7))$ and the third edge joins $(6,~v_3(a+7))$ and $(7,~v_3(b+a+1))$. The residual polynomial attached to the each edge is linear. Hence $\mathfrak{ J}=\mathfrak{ p}_1^3\mathfrak{ p}_2\mathfrak{p_3}\mathfrak{p_4}$, where  the residual degree of $\mathfrak{p_i}=1$, for each $i=1,2,3,4$  is $1$. Thus $3\mid i(K)$.\\
\indent Let $a\equiv 5\mod 9$ and $v_3(b)=1$, then $v_3(a+7)=1$. In this, either $\phi_{1}$ provides one prime ideal of residual degree $1$ and $\phi_{-1}$ provides two prime ideal of residual degree $1$ each or $\phi_{1}$ provides two prime ideal of residual degree $1$ each and $\phi_{-1}$ provides one prime ideal of residual degree $1$ (according as $b_3\equiv -1\mod 3$ or $b_3\equiv 1\mod 3$). Thus $\mathfrak{ J}=\mathfrak{ p}_1^3\mathfrak{ p}^2_2\mathfrak{p_3}$, where  the residual degree of $\mathfrak{p_i}$, for each $i=1,2,3$  is $1$. Thus $3\mid i(K)$.\\
\indent Let $v_3(a+1)>1$ and $v_3(b)=1$, then $v_3(a+7)=1$ and $v_3(b-\delta(a+1))=1$. Thus for  each $\delta$, $\phi_\delta$ provides one prime ideal of residual degree $1$. So $\mathfrak{J}=\mathfrak{ p}_1^3\mathfrak{ p}_2^3$.\\
 \indent Let $v_3(a+1)>1$, $v_3(b)>1$ and  $((a+1)_3,b_3)\in \{(-1,1), (1,-1)\}\mod3$, then $v_3(b-a-1)>1$ and $v_3(b+a+1)=1$. So the $\phi_{-1}$-Newton polygon of $f(x)$ has two edges of positive slope and residual polynomial attached to each edge is linear. And $\phi_1$ provides one prime ideal of degree $1$. Therefore $\mathfrak{ J}=\mathfrak{ p}_1^2\mathfrak{ p}_2\mathfrak{ p}_3^3$. Thus $3\mid i(K)$.\\
 \indent Let $v_3(a+1)>1$, $v_3(b)>1$  and $((a+1)_3,b_3)\in \{(-1,-1), (1,1)\}\mod3$, then $v_3(b+a+1)>1$ and $v_3(b-a-1)=1$. So the $\phi_{1}$-Newton polygon of $f(x)$ has two edges of positive slope and residual polynomial attached to each edge is linear. The $\phi_{-1}$ provides one prime ideal of degree $1$. Therefore $\mathfrak{ J}=\mathfrak{ p}_1^2\mathfrak{ p}_2\mathfrak{ p}_3^3$. Thus $3\mid i(K).$ \\\\
\noindent\textbf{Case B3:} $a\equiv 1\mod 3$ and $b\equiv 0\mod 3$. In this case $f(x)\equiv x(x^2+1)^3\mod 3$. In this case $x$-provides one prime ideal of residual degree $1$. If we denote $R(\mathfrak{(p)}$ the residual degree of the prime ideal $\mathfrak{p}$, then for $3\mathbb{O}_K$ we have the following possibilities.
\begin{center}
	\begin{longtable}[h!]{ |m{1cm}|m{4cm}|m{7cm}|m{7cm}|} 
		\hline
		Case &{ Factorization  of $3\mathbb{O}_K$ }& Residual degree\\
		\hline
		(1)& $3\mathbb{O}_K=\mathfrak{ p}_1\mathfrak{ p}_2^3$& $R(\mathfrak{p}_1)=1$ and $R(\mathfrak{p}_1)=2$\\
		\hline
		(2)& $3\mathbb{O}_K=\mathfrak{ p}_1\mathfrak{ p}_2\mathfrak{ p}_3$& $R(\mathfrak{p}_1)=1$, $R(\mathfrak{p}_1)=2$ and $R(\mathfrak{p}_1)=4$\\
		\hline
		(3)& $3\mathbb{O}_K=\mathfrak{ p}_1\mathfrak{ p}_2$& $R(\mathfrak{p}_1)=1$ and  $R(\mathfrak{p}_2)=6$ \\
		\hline
		(4)& $3\mathbb{O}_K=\mathfrak{ p}_1\mathfrak{ p}_2^2\mathfrak{ p}_3$& $R(\mathfrak{p}_1)=1$, $R(\mathfrak{p}_1)=2$ and $R(\mathfrak{p}_1)=2$\\
		\hline
		(5)& $3\mathbb{O}_K=\mathfrak{ p}_1\mathfrak{ p}_2\mathfrak{ p}_3\mathfrak{ p}_4$& $R(\mathfrak{p}_1)=1$ and  $R(\mathfrak{p}_i)=2, i=2,3,4$\\
		\hline
		\end{longtable}
	Table
\end{center}
Clearly in each case, there can be atmost $3$ prime ideals of residual degree $2$ each. But there are $5$ monic irreducible polynomial of degree $2$ over $\F_3.$ Thus in view of Lemma  \ref{Th 1.12} $3\nmid i(K)$.	This completes the proof.
\end{proof}
\begin{proof}[Proof of Theorem \ref{Th1.3}] 
	In this case $5\mid i(K)$ if and only if $a^7\equiv 2b^6\mod 5$.\\ i.e. $(a,~b)\in \{(0,~0),(2,~2),(2,~-2),(3,~1),(3,~-1)\}$.\\
	Let $(a,~b)=(0,~0)$, then $f(x)\equiv x^7\mod 7$. If $7v_5(a)>6v_5(b)$, then $x$-Newton polygon of $f(x)$ has a single edge of positive slope. The residual polynomial attached to this edge is linear. Therefore $5\mathbb{O}_K=\mathfrak{ p}^7$.  If $7v_5(a)<6v_5(b)$ and $v_5(a)\in\{1,5\}$, then $x$-Newton polygon of $f(x)$ has two  edges of positive slope. The residual polynomial attached to each edge is linear. Therefore $5\mathbb{O}_K=\mathfrak{ p}_1^6\mathfrak{ p}_2$. If $7v_5(a)<6v_5(b)$ and $v_5(a)\in\{2,4\}$, then $x$-Newton polygon of $f(x)$ has two  edges of positive slope. The residual polynomial attached to first edge is $(Y+\bar{1})(Y-\bar{1})$ or $ (Y+\bar{3})(Y-\bar{2})$ and to the second edge is linear. Therefore   $\mathfrak{ p}_1^3\mathfrak{ p}_2^3\mathfrak{ p}_3$. If $7v_5(a)<6v_5(b)$ and $v_5(a)\in\{2,4\}$, then $x$-Newton polygon of $f(x)$ has two  edges of positive slope. The residual polynomial attached to first edge is $(Y+\bar{1})(Y^2-Y+\bar{1})$ or $ (Y-\bar{1})(Y^2+Y+\bar{1})$ and to the second edge is linear. Therefore $5\mathbb{O}_K=\mathfrak{ p}_1^3\mathfrak{ p}_2^3\mathfrak{ p}_3$.  Thus $5\nmid i(K)$.\\
	Let $(a,~b)=(2,~2)$, then $f(x)\equiv (x+2)^2(x-1)(x^4+2x^3+4x^2+2x+2)\mod5$, so $5\mathbb{O}_K=\mathfrak{ p}_1\mathfrak{ p}_2\mathfrak{ p}_3$  or  $5\mathbb{O}_K=\mathfrak{ p}_1\mathfrak{ p}_2\mathfrak{ p}_3$. Thus $5\nmid i(K)$.\\
	Let $(a,~b)=(2,~-2)$, then $f(x)\equiv (x+1)(x+3)^2(x^4+3x^2+4x^2+3x+2)\mod5$, so $5\mathbb{O}_K=\mathfrak{ p}_1\mathfrak{ p}_2\mathfrak{ p}_3$  or $5\mathbb{O}_K=\mathfrak{ p}_1^2\mathfrak{ p}_2\mathfrak{ p}_3$ or   $5\mathbb{O}_K=\mathfrak{ p}_1\mathfrak{ p}_2\mathfrak{ p}_3\mathfrak{ p}_4$. Thus $5\nmid i(K)$.\\
	Let $(a,~b)=(2,~-2)$, then $f(x)\equiv (x+1)(x+3)^2(x^4+3x^3+4x^2+3x+2)\mod5$, so $5\mathbb{O}_K=\mathfrak{ p}_1\mathfrak{ p}_2\mathfrak{ p}_3$  or $\mathfrak{ p}_1\mathfrak{ p}_2^2\mathfrak{ p}_3$ $5\mathbb{O}_K=\mathfrak{ p}_1\mathfrak{ p}_2\mathfrak{ p}_3\mathfrak{ p}_4$. Thus $5\nmid i(K)$.\\
	Let $(a,~b)=(3,~1)$, then $f(x)\equiv (x+4)(x+3)^2(x^4+4x^3+x^2+x+2)\mod5$, so $5\mathbb{O}_K=\mathfrak{ p}_1\mathfrak{ p}_2\mathfrak{ p}_3$  or $\mathfrak{ p}_1\mathfrak{ p}_2^2\mathfrak{ p}_3$ $5\mathbb{O}_K=\mathfrak{ p}_1\mathfrak{ p}_2\mathfrak{ p}_3\mathfrak{ p}_4$. Thus $5\nmid i(K)$.\\
	Let $(a,~b)=(3,~-1)$, then $f(x)\equiv (x+1)(x+2)^2(x^4+x^2+x^2+4x+2)\mod5$, so $5\mathbb{O}_K=\mathfrak{ p}_1\mathfrak{ p}_2\mathfrak{ p}_3$  or $\mathfrak{ p}_1\mathfrak{ p}_2^2\mathfrak{ p}_3$ $5\mathbb{O}_K=\mathfrak{ p}_1\mathfrak{ p}_2\mathfrak{ p}_3\mathfrak{ p}_4$. Thus $5\nmid i(K)$.\\
	Next we show that $7\nmid i(K)$. Suppose there exists distinct non-zero prime ideals $\mathfrak{p}_1$,  $\cdots$, $\mathfrak{p}_r$ of $\mathbb{O}_K$ such that $7\mathbb{O}_K=\mathfrak{p}_1^{e_1}\cdots\mathfrak{p}_r^{e_r}$, where $e_i \geq 1$, then by the Fundamental Equality, $e_1f_1+\cdots +e_rf_r=7 $ where $f_i$ is the residual degree of $\mathfrak p_i$ for $i=1,2,\cdots,r$. Since $e_i\ge1$ for all $i=1,2,\cdots,r$, therefore there can be at most $7$ prime ideals lying above $7$, but for every positive integer $h$ the number of monic  irreducible polynomials of degree $h$ in $\F_7[x]$ is greater than or equal to $7$. So by Lemma \ref{Th 1.12}, $7\nmid i(K)$. This completes the proof. \end{proof}

\vspace{0.2 cm}
%\begin{center}
%ACKNOWLEDGMENT
%\end{center}

%The authors are grateful to the anonymous referee for suggesting several changes  which greatly improved the appearance of the paper.

\end{document}